\numberwithin{equation}{section}
\theoremstyle{plain}
\newtheorem{theorem}{Theorem}[section]
\newtheorem{lemma}[theorem]{Lemma} 
\newtheorem{corollary}[theorem]{Corollary} 
\theoremstyle{definition}
\newtheorem{definition}[theorem]{Definition} 
\newtheorem{question}[theorem]{Question} 
\newtheorem{example}[theorem]{Example} 
\theoremstyle{remark}
\newtheorem{remark}[theorem]{Remark} 
\providecommand{\norm}[1]{\lVert#1\rVert}
\DeclareMathOperator{\diag}{diag}
\DeclareMathOperator{\rank}{rank}
\begin{document}

\title[Simplices with fixed volumes of codimension 2 faces]{Simplices with fixed volumes of codimension 2 faces in a continuous deformation}

\author{Lizhao Zhang}
\address{}
\curraddr{}
\email{lizhaozhang@alum.mit.edu}

\subjclass[2020]{52C25, 52B11, 51M25}

\keywords{rigid motion, continuous deformation, dual, pseudo-Euclidean space.}

\date{}

\begin{abstract}
For any $n$-dimensional simplex in the Euclidean space $\mathbb{R}^n$ with $n\ge 4$,
it is asked that if a continuous deformation preserves the volumes of all the codimension 2 faces,
then is it necessarily a \emph{rigid} motion.
While the question remains open and the general belief is that the answer is affirmative,
for all $n\ge 4$, we provide counterexamples to a variant of the question 
where $\mathbb{R}^n$ is replaced by a pseudo-Euclidean space $\mathbb{R}^{p,n-p}$ 
for some unspecified $p\ge 2$.
\end{abstract}

\maketitle

\section{Introduction}

\subsection{Background and motivations}

For any $n$-dimensional simplex in $\mathbb{R}^n$,
there are $\binom{n+1}{2}$ edges,
and up to congruence the $n$-simplex is uniquely determined by its edge lengths.
As there are also the same number of $(n-2)$-faces,
it is natural to ask the following question:

\begin{question}
\label{question_simplex_rigid}
For any $n$-simplex $Q$ in $\mathbb{R}^n$ with $n\ge 4$, if a continuous deformation preserves 
the $(n-2)$-volumes of all the $(n-2)$-faces of $Q$, then is it necessarily a \emph{rigid} motion?
\end{question}

The discrete version of the question, without the restriction of \emph{continuous} deformation,
asks that if the $(n-2)$-volumes of all the $(n-2)$-faces always determine the $n$-simplex up to \emph{congruence}.
It asks for $n\ge 4$ because the answer for $n=2$ is trivially negative and for $n=3$ is trivially affirmative. 
This question (the combination of both the continuous and the discrete versions)
is classical that comes natural and many people may have asked it in the past,
but it is hard to say who asked it first;
for some account of the background of the question, see \cite{MoharRivin}.
The discrete version was answered negatively for all $n\ge 4$,
and various counterexamples were constructed, see McMullen~\cite{McMullen:simplices}
and Mohar and Rivin~\cite{MoharRivin}.
In fact, the counterexample in \cite{MoharRivin} was not only simple,
but can also be easily extended to construct counterexamples to show that
for any $r\ge 2$ the $r$-volumes of all $r$-faces of an $n$-simplex do not necessarily 
determine the $n$-simplex up to congruence;
and it can be extended to the spherical and hyperbolic spaces as well, 
see Zhang~\cite[Section~2.12]{Zhang:rigidity}.

However, Question~\ref{question_simplex_rigid} remains open,
and a counterexample to Question~\ref{question_simplex_rigid} is, if it were to exist,
a lot harder to find than in the discrete version
(because a continuous counterexample automatically generates a family of infinitely many discrete counterexamples).
The general belief is that the answer to Question~\ref{question_simplex_rigid} may still be affirmative,
e.g., see \cite[Question 3]{MoharRivin} and Sabitov~\cite[Unsolved problem 10]{Sabitov:algebraic}.
For $n=4$, Gaifullin~\cite{Gaifullin:volume} showed that, given a \emph{generic} set of 2-face areas,
there exist not more than a finite congruence classes of 4-simplices in $\mathbb{R}^4$
with the given 2-face areas, and thus no continuous counterexamples in the generic situation.
The proof used an algebraic approach to show that the square of any edge length 
is a root of a polynomial whose coefficients (including the leading term) are polynomials
in these variables of the squares of the 2-face areas, 
and thus only has finite possibilities in the generic situation;
and the method should apply to both Euclidean and pseudo-Euclidean spaces.
But Gaifullin also pointed out that it is much more interesting to obtain results
that hold for \emph{all} simplices, as possible \emph{non}-generic situation 
may arise when the polynomial coefficients (including the leading term) are all zero.
We stress that non-generic situation does not necessarily mean that the simplex has to be degenerate,
so for $n=4$, Question~\ref{question_simplex_rigid} is still not resolved yet.

\subsection{Main results}

While we do not solve Question~\ref{question_simplex_rigid} in this paper, 
but in some aspects contrary to the general belief,
for all $n\ge 4$, we provide counterexamples to a close variant of this question 
where $\mathbb{R}^n$ is replaced by a pseudo-Euclidean space $\mathbb{R}^{p,n-p}$ 
for some unspecified $p\ge 2$.
The treatment is different for $n=4$ and $n\ge 5$.

\begin{theorem}[Main Theorem 1]
\label{theorem_flexible_pseudo_Euclidean_n_4}
For $n=4$ in $\mathbb{R}^{3,1}$,
there exists a continuous family of non-congruent $4$-simplices $Q$ with fixed areas of all the 2-faces,
and all the 2-faces are in Euclidean planes.
\end{theorem}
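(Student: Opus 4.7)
The plan is to exhibit an explicit one-parameter family of $4$-simplices $Q_\lambda \subset \mathbb{R}^{3,1}$ with the advertised properties, rather than proving an existence result abstractly. Because a generic count gives $20 - \dim\,(\mathrm{O}(3,1)\!\ltimes\!\mathbb{R}^{3,1}) = 10$ parameters against $10 = \binom{5}{3}$ area equations, some structural degeneracy has to be engineered; the most likely source is either a symmetry that collapses several face-area equations onto each other, or the extra sign freedom in the squared edge lengths that $\mathbb{R}^{3,1}$ provides (timelike edges have $d_{ij}^{\,2}<0$, yet their $2$-faces can still be Euclidean so long as the appropriate $2\times 2$ Gram matrix is positive definite).

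My first step would be to choose a symmetric ansatz. A natural one is to place three vertices $v_1,v_2,v_3$ forming an equilateral triangle in a spacelike $2$-plane, and to let $v_4,v_5$ vary in the orthogonal Lorentzian $2$-plane, subject to an involution that swaps them. This collapses the $10$ face-area conditions to essentially three distinct ones (the base triangle is fixed, the three $(v_i,v_j,v_4)$ faces share a common area, similarly for $v_5$, and the three $(v_i,v_4,v_5)$ faces share another). If that ansatz turns out to be too restrictive (a direct degree count suggests it is), I would weaken the symmetry: keep only a $\mathbb{Z}_2$ reflection, or place $v_1,v_2,v_3,v_4$ in a generic spacelike $3$-flat and let $v_5$ carry a nontrivial time component, tuning the ansatz until the number of distinct area constraints drops below the number of free parameters by one.

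Concretely, I would write the squared edge lengths $x_{ij}=d_{ij}^{\,2}$ as polynomials in the free parameters and substitute into the Heron identity
\[
16A_{ijk}^{\,2}=2(x_{ij}x_{jk}+x_{ij}x_{ik}+x_{ik}x_{jk})-(x_{ij}^{\,2}+x_{ik}^{\,2}+x_{jk}^{\,2}),
\]
then solve the resulting algebraic system for a $1$-parameter family $\lambda\mapsto Q_\lambda$. The key is that $16A_{ijk}^{\,2}$ is quadratic in the $x_{ij}$'s, so the equation ``area $=$ constant'' is a quadric hypersurface in the space of squared distances; allowing $x_{ij}$ to be negative produces additional branches of this quadric, and I expect the flex to live on a transverse intersection of such branches that is invisible in the purely Euclidean setting.

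Having obtained the candidate family, I would verify along the curve that (i) the full $4\times 4$ Gram matrix of $(v_i-v_0)_{i=1}^{4}$ keeps signature $(3,1)$ (so $Q_\lambda$ is genuinely a non-degenerate $4$-simplex in $\mathbb{R}^{3,1}$), (ii) for every triangular face the $2\times 2$ Gram matrix of the two edges from a fixed vertex is positive definite (Euclidean face), and (iii) $\lambda$ really parametrizes non-congruent simplices, i.e.\ the $4\times 4$ Gram matrix is not constant along the curve modulo the action of $\mathrm{O}(3,1)$, equivalently some $x_{ij}(\lambda)$ is non-constant. The hardest part I anticipate is Step~(Step of finding the flex): identifying the exact symmetry/sign pattern that makes the area Jacobian drop rank is a delicate matching problem, and producing a flex that is not merely first-order but integrates to a genuine $1$-parameter family will probably require the ansatz to be rigid enough that the surviving moduli are cut out by a single algebraic relation with a visible rational curve of solutions.
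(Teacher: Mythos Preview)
Your proposal is a research plan rather than a proof: you describe how you \emph{would} search for a flex (choose a symmetric ansatz, count constraints, weaken the symmetry if necessary, solve the resulting algebraic system), but you never actually produce the family $Q_\lambda$ or verify any of the three checks (i)--(iii). The heart of the theorem is precisely the ``delicate matching problem'' you flag as the hardest step, and that step is left entirely open. A symmetry ansatz that collapses ten Heron quadrics to three still leaves you with a system whose solvability is not at all obvious; your own degree count already suggests the first ansatz is too rigid, and nothing in the proposal explains why the weakened version should contain a rational curve rather than, say, finitely many points.

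The paper's argument bypasses this search entirely through a structural trick you are missing. Rather than using symmetry to reduce the number of distinct area equations, it makes \emph{all ten} equations hold identically by writing the Gram matrix as $C=tA+\tfrac{1}{t}B$ where $A,B$ are Gram matrices of two ``pseudo $4$-simplices'' each of whose ten $2$-faces is degenerate (zero area). The key algebraic fact is that if a $2\times 2$ symmetric matrix is written as $t\,(\text{rank }1)+\tfrac{1}{t}\,(\text{rank }1)$, its determinant is independent of $t$; hence every $2\times 2$ principal minor of $C$ --- i.e.\ every squared face area --- is automatically constant along the whole line $t>0$. The remaining work is to manufacture such $A$ and $B$ (done via the isosceles-tetrahedron configuration $a_1+a_2+a_3=0$, which forces all four triangular faces to have zero Heron area while the $3$-volume stays nonzero) and then to check that $\det C<0$ so the signature is $(3,1)$. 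This is a one-line determinant calculation, not a system to be solved. Your Heron-quadric intersection picture is not wrong, but it gives no mechanism for finding the flex; the decomposition $C=tA+\tfrac{1}{t}B$ with all-degenerate summands is the missing idea.
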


\begin{theorem}[Main Theorem 2]
\label{theorem_flexible_pseudo_Euclidean}
For any $n\ge 5$, in $\mathbb{R}^{p,n-p}$ for some unspecified $p\ge 2$,
there exists a continuous family of non-congruent $n$-simplices $Q$
with fixed $(n-2)$-volumes of all the $(n-2)$-faces,
and all the dihedral angles are Euclidean angles.
\end{theorem}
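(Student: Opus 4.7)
The plan is to extend the $4$-dimensional construction of Theorem~\ref{theorem_flexible_pseudo_Euclidean_n_4} to arbitrary $n\ge 5$ via an orthogonal join with a rigid Euclidean simplex. Concretely, I embed $\mathbb{R}^{3,1}$ isometrically into $\mathbb{R}^{p,n-p}$ for a sufficiently large $p$ (say $p=n-1$, so that the orthogonal complement $E$ is Euclidean of dimension $n-4$), choose a fixed non-degenerate $(n-5)$-simplex $\Delta\subset E$ with vertices $w_1,\ldots, w_{n-4}$, and let $Q^{(n)}(t)$ be the $n$-simplex on the vertex set $\{v_0(t),\ldots,v_4(t)\}\cup\{w_1,\ldots,w_{n-4}\}$, where the $v_i(t)$ are the vertices of the flexible family $Q^{(4)}(t)$ furnished by Theorem~\ref{theorem_flexible_pseudo_Euclidean_n_4}, translated so that $v_0(t)=0$.

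The $(n-2)$-faces of $Q^{(n)}(t)$ split by how many of the two removed vertices lie in $\{v_i\}$. With $v_0=0$ and $\mathbb{R}^{3,1}\perp E$, the Gram matrix of edge vectors out of $v_0$ is always block-diagonal, so the $(n-2)$-volume of every $(n-2)$-face factors as the product of an $\mathbb{R}^{3,1}$-block determinant and a Euclidean $E$-block determinant (up to a universal combinatorial constant). When both removed vertices come from $\{v_i\}$, the two blocks have sizes $2$ and $n-4$, and the $\mathbb{R}^{3,1}$-block is the Gram matrix of a 2-face of $Q^{(4)}(t)$; its determinant is the squared 2-face area, preserved by Theorem~\ref{theorem_flexible_pseudo_Euclidean_n_4}. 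Hence these $(n-2)$-volumes are constant in $t$. One then checks that the dihedral angles at such faces, which are read off from the outer normals of adjacent $(n-1)$-faces living either in $E$ or in the orthocomplement of a Euclidean 2-plane inside $\mathbb{R}^{3,1}$, are Euclidean, provided the 3-faces of $Q^{(4)}(t)$ inside $\mathbb{R}^{3,1}$ carry signature $(2,1)$.

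The main obstacle is the two remaining classes of $(n-2)$-faces: removing two $w_j$'s yields a block determinant involving the full 4-volume of $Q^{(4)}(t)$, and removing one $v_i$ and one $w_j$ yields one involving a 3-face volume of $Q^{(4)}(t)$, neither of which is a priori preserved by the base flex. I would address this in one of two ways. Route (i): show, by direct computation using the explicit form of the Theorem~\ref{theorem_flexible_pseudo_Euclidean_n_4} flex, that all $k$-face volumes of $Q^{(4)}(t)$ for $k=2,3,4$ are in fact preserved; this would follow if the relevant Cayley--Menger-type determinants reduce to algebraic consequences of the preserved 2-area identities in pseudo-Euclidean signature. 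Route (ii): drop the rigidity of the $w_j$'s and allow them to drift continuously within $\mathbb{R}^{p,n-p}$, with the drift chosen so that the newly introduced $t$-dependence in the mixed Gram entries $v_i(t)\cdot w_j(t)$ exactly cancels the variation of the problematic volume factors in each Cayley--Menger determinant.

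Non-congruence of the family $\{Q^{(n)}(t)\}$ then follows immediately from non-congruence of $\{Q^{(4)}(t)\}$: any isometry of $\mathbb{R}^{p,n-p}$ mapping $Q^{(n)}(t_1)$ onto $Q^{(n)}(t_2)$ restricts to an affine isometry of the $\mathbb{R}^{3,1}$ summand carrying $Q^{(4)}(t_1)$ to $Q^{(4)}(t_2)$, contradicting the non-congruence assertion of Theorem~\ref{theorem_flexible_pseudo_Euclidean_n_4}.
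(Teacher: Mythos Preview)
Your proposal has a genuine gap that cannot be repaired along the lines you suggest. The decisive obstruction is that Route~(i) is provably false for the flex of Theorem~\ref{theorem_flexible_pseudo_Euclidean_n_4}: by Lemma~\ref{lemma_flexible_pseudo_4} (and Corollary~\ref{corollary_flexible_volume_non_constant_4}) the $4$-volume $\det(C)=-4a_1^2b_4^2(a_2a_3t+b_2b_3/t)^2$ is \emph{not} constant in $t$, and a direct check of the $3\times 3$ principal minors of $C=tA+\tfrac{1}{t}B$ (e.g.\ the upper-left one has $t^3$-coefficient $-4a_1^2a_2^2a_3^2\neq 0$) shows that the $3$-face volumes of $Q^{(4)}(t)$ vary as well. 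Hence for $n\ge 6$ the $(n-2)$-faces obtained by deleting two $w_j$'s, and for all $n\ge 5$ those obtained by deleting one $v_i$ and one $w_j$, genuinely change volume under your join. Route~(ii) is not a proof but a wish: you would need to exhibit a solution to an overdetermined system of polynomial constraints in the drift parameters, and nothing in your outline indicates why such a solution should exist.

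There is also a problem with the dihedral-angle claim. For an $(n-2)$-face obtained by deleting two $v_i$'s, its affine span is a Euclidean $(n-2)$-plane (a Euclidean $2$-plane in $\mathbb{R}^{3,1}$ direct-summed with $E$), so its orthogonal $2$-plane in $\mathbb{R}^{n-1,1}$ is the orthogonal complement of that Euclidean $2$-plane inside $\mathbb{R}^{3,1}$, which has signature $(1,1)$. Thus these dihedral angles are \emph{not} Euclidean, exactly mirroring the remark after Theorem~\ref{theorem_flexible_pseudo_Euclidean} that the dihedral angles in Theorem~\ref{theorem_flexible_pseudo_Euclidean_n_4} are not Euclidean.

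The paper's proof proceeds by an entirely different mechanism: it passes to the \emph{dual} simplex $P$, where the $(n-2)$-volumes of $Q$ become the areas of the triangles $OP_iP_j$ (equivalently, the order-$2$ principal minors of the $(n+1)\times(n+1)$ Gram matrix $U=(v_i\cdot v_j)$ with $\mathbf{1}$ in its null space). One then constructs, for each $n\ge 5$, two matrices $A,B\in\mathcal{D}$ (all order-$2$ principal minors zero) with $\rank A,\rank B\ge\lceil n/2\rceil$ such that $tA+\tfrac{1}{t}B\in\mathcal{U}_0$; Lemma~\ref{lemma_matrix_2_by_2} then forces all order-$2$ principal minors to be constant. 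Dualizing back yields $Q$. This construction is native to dimension $n$ and does not bootstrap from $n=4$; indeed the paper notes (Remark~\ref{remark_matrix_rank}) that the $\mathcal{D}$-method cannot even work for $n=4$.
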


Here a \emph{Euclidean angle} means that for an $(n-2)$-face of the $n$-simplex, 
its orthogonal plane in $\mathbb{R}^{p,n-p}$ is a 2-dimensional Euclidean plane, and thus requires $p\ge 2$;
besides, it also ensures that the $(n-2)$-face has non-degenerate metric and thus has non-zero volume.
We stress that being Euclidean angles is neither a necessary condition 
nor a feature that we are particularly looking for, but it helps to present our results.
By contrast, in Theorem~\ref{theorem_flexible_pseudo_Euclidean_n_4} 
the dihedral angles at Euclidean 2-faces in $\mathbb{R}^{3,1}$ are not Euclidean angles.

Though not yet resolving Question~\ref{question_simplex_rigid},
Theorem~\ref{theorem_flexible_pseudo_Euclidean_n_4} and \ref{theorem_flexible_pseudo_Euclidean}
are still somewhat surprising results.
As far as the degrees of freedom is concerned,
the non-positive definiteness of the pseudo-Euclidean space 
does not necessarily make the question easier
because it does not increase the degrees of freedom that we can work on compared to the Euclidean case.
The results may also shed some light, heuristically,
on how to find counterexamples to Question~\ref{question_simplex_rigid} if they were to exist.
But we remark that both Theorem~\ref{theorem_flexible_pseudo_Euclidean_n_4}
and \ref{theorem_flexible_pseudo_Euclidean} are of interest in their own right,
no matter if Question~\ref{question_simplex_rigid} is finally answered positively or negatively in the future.

Rigidity and flexibility of geometric frameworks have been extensively investigated in the past.
A flexible polyhedron in $\mathbb{R}^3$ is a closed polyhedral surface that admits continuous
non-rigid deformation while all faces remaining rigid.
The first embedded (non-self-intersecting) flexible polyhedron in $\mathbb{R}^3$ was discovered by 
Connelly~\cite{Connelly:counterexample}.
It was also shown that the volume of any flexible polyhedron in $\mathbb{R}^3$
remains constant during the continuous deformation, proving the bellows conjecture
(see Sabitov~\cite{Sabitov:invariance} and Connelly \emph{et al.}~\cite{ConnellySabitovWalz}).
Alexandrov~\cite{Alexandrov:Minkowski} showed that flexible polyhedron also exists in Minkowski 3-space
and preserves the volume as well.%
\footnote{The approach of Alexandrov~\cite{Alexandrov:Minkowski} is to apply the existing strategy 
from the Euclidean space and show that it works in the Minkowski space as well.
Though our Theorem~\ref{theorem_flexible_pseudo_Euclidean_n_4} also deals with the Minkowski space,
the approach is different: we start with a strategy not obviously depending on a particular space at first sight,
and then show that it produces counterexamples that can be realized in the Minkowski 4-space,
but whether the strategy can also be adapted to the Euclidean space remains to be seen.
}
The bellows conjecture in the spherical case $\mathbb{S}^3$ was disproved 
by Alexandrov~\cite{Alexandrov:flexible}.
If the facets of an $n$-simplex need not be rigid as above, it was shown that if an $n$-simplex 
(in the Euclidean, spherical or hyperbolic spaces) starts as a 
\emph{degenerate} simplex during a continuous deformation 
with a \emph{single} constraint to preserve a signed sum of its facet volumes
(with the signs naturally induced by Radon's theorem),
then in generic situation its degeneracy is always preserved,
with the non-generic situation also precisely specified (Zhang~\cite{Zhang:lifting}).

For any $n$-simplex in $\mathbb{R}^{p,n-p}$ with fixed volumes of codimension 2 faces
during a continuous deformation,
in the spirit of the bellows conjecture, it seems natural to ask that if the volume of the $n$-simplex 
itself also necessarily remains constant.
(A discrete version of the question in $\mathbb{R}^n$ due to Connelly was already answered 
negatively for all $n\ge 4$.)
If $n$ is even and $n\ge 6$, while we do not fully answer this question, 
for what it is worth, we observe that, 
for the \emph{particular} counterexamples we constructed
in Theorem~\ref{theorem_flexible_pseudo_Euclidean}, 
the volume of the $n$-simplex \emph{does} remain constant.
We answer this question negatively if $n=4$ or $n$ is odd.

\begin{theorem}
\label{theorem_flexible_volume_non_constant}
If $n=4$ or $n\ge 5$ and $n$ is odd, in $\mathbb{R}^{p,n-p}$ for some unspecified $p\ge 2$,
there exists a continuous family of non-congruent $n$-simplices $Q$
with fixed $(n-2)$-volumes of all the $(n-2)$-faces,
but the volume of $Q$ does not remain constant.
\end{theorem}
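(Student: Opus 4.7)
The plan is to reuse the continuous families of simplices already produced in Theorems~\ref{theorem_flexible_pseudo_Euclidean_n_4} and~\ref{theorem_flexible_pseudo_Euclidean}, parametrize them by a real parameter $t$, and verify directly that the $n$-volume $V(t)$ is non-constant in $t$. Since those constructions already fix all the $(n-2)$-face volumes by design, this is the only remaining point. In both cases the verification reduces to a determinantal computation: $V(t)^2$ is, up to sign, a Cayley--Menger-type determinant in the squared pseudo-Euclidean edge lengths, which are explicit functions of $t$ coming from the earlier proofs.

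For $n=4$ in $\mathbb{R}^{3,1}$, I would plug the parametric edge lengths furnished by Theorem~\ref{theorem_flexible_pseudo_Euclidean_n_4} into this determinant and either evaluate $V$ at two distinct values of $t$ in the family, or compute $dV/dt$ at a convenient point, exhibiting $V(t_0)\neq V(t_1)$. For odd $n\ge 5$, I would use the same family as in Theorem~\ref{theorem_flexible_pseudo_Euclidean}. The paragraph preceding Theorem~\ref{theorem_flexible_volume_non_constant} records that for even $n\ge 6$ this very construction yields \emph{constant} volume, so the main substantive task is to isolate the parity-sensitive feature responsible and show that it fails for odd $n$. A natural guess is that the construction admits an involution $\sigma$ of the parameter (a coordinate swap or sign reversal) whose induced effect on the oriented $n$-volume is $(-1)^{n}$: in even dimensions this forces $V(t)=V(\sigma(t))$ and, combined with the polynomial relations satisfied by $V^{2}$, pins $V$ to a constant; in odd dimensions the two branches pick up opposite signs and non-constancy is automatic.

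The main obstacle is computational rather than conceptual. In the pseudo-Euclidean setting the squared edge lengths can be negative, so the determinantal expression for $V^{2}$ carries careful signs, and the constructions involve enough free parameters that tracking them cleanly takes some care. Once the parity asymmetry behind the even-$n$ phenomenon has been identified, however, verifying $V(t)\not\equiv\mathrm{const}$ for $n=4$ and for odd $n\ge 5$ becomes a matter of direct substitution, and a single explicit computation for $n=5$ should extend uniformly to all odd $n\ge 5$ since the construction of Theorem~\ref{theorem_flexible_pseudo_Euclidean} is given by a recipe that is uniform in $n$.
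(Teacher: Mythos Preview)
Your plan for $n=4$ is exactly what the paper does: in the construction of Theorem~\ref{theorem_flexible_pseudo_Euclidean_n_4} the Gram matrix of $Q$ is $C=tA+\tfrac{1}{t}B$, and $\det(C)=-4a_1^2b_4^2\bigl(a_2a_3t+b_2b_3\tfrac{1}{t}\bigr)^2$ is visibly non-constant. No Cayley--Menger machinery is needed beyond this Gram determinant.

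For odd $n\ge 5$ your outline has a real gap. The family of Theorem~\ref{theorem_flexible_pseudo_Euclidean} does \emph{not} hand you the squared edge lengths of $Q$ as explicit functions of $t$: the matrix $tA+\tfrac{1}{t}B$ parametrizes the \emph{dual} simplex $P$ (more precisely, the vectors $v_i=\overrightarrow{OP_i}$), and $Q$ is then obtained from $P$ by polar duality followed by a $t$-dependent rescaling $c=c(t)$ chosen precisely so that the $(n-2)$-face volumes stay fixed. Plugging ``the parametric edge lengths'' of $Q$ into a Cayley--Menger determinant would first require unwinding this duality and scaling, which is considerably more work than you suggest.

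The paper avoids this entirely by using the duality volume formulas $V_{n-2}(F_{ij})=c_2\,V_2(OP_iP_j)\,c^{n-2}/V_n(P)$ and $V_n(Q)=c_0\,c^{\,n}/V_n(P)$. Since $V_2(OP_iP_j)$ and $V_{n-2}(F_{ij})$ are both constant, one deduces that $V_n(Q)$ is constant if and only if $V_n(P)$ is. So everything reduces to whether the $n\times n$ principal minors of $tA+\tfrac{1}{t}B$ are constant in $t$.

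Here is where the parity enters, and it is not the involution mechanism you guessed. Both $A$ and $B$ are built to have rank $m$, where $n+1=2m$ (odd $n$) or $n+1=2m+1$ (even $n$). Expanding the determinant of an $n\times n$ submatrix of $tA+\tfrac{1}{t}B$ as a sum over complementary minors, the summand with a $k\times k$ minor from $tA$ contributes a factor $t^{\,2k-n}$, and it vanishes unless $k\le m$ and $n-k\le m$. For $n=2m$ this forces $k=m$ and the only surviving power is $t^{0}$, so the determinant is constant. For $n=2m-1$ it forces $k\in\{m-1,m\}$, giving only $t^{\pm 1}$ terms and \emph{no} constant term; hence the determinant, and with it $V_n(P)$, cannot be constant. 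That rank-counting argument is the genuine parity-sensitive feature, and it replaces both your proposed involution and any explicit case-by-case computation.
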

 
Similar to Theorem~\ref{theorem_flexible_pseudo_Euclidean_n_4},
in $\mathbb{R}^{p,5-p}$ for some unspecified $p\ge 2$,
we can also construct a continuous family of non-congruent $5$-simplices $Q$ 
with fixed areas of all the 2-faces.

\begin{theorem}
\label{theorem_flexible_simplex_5_face_2}
For $n=5$, in $\mathbb{R}^{p,5-p}$ for some unspecified $p\ge 2$,
there exists a continuous family of non-congruent $5$-simplices $Q$ with fixed areas of all the 2-faces,
and all the 2-faces are in Euclidean planes, but the volume of $Q$ does not remain constant.
\end{theorem}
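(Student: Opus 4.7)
The plan is to mimic the construction used in the proof of Theorem~\ref{theorem_flexible_pseudo_Euclidean_n_4} but raised one dimension. Since we already have a flexible 4-simplex $Q_4(t)$ in $\mathbb{R}^{3,1}$ with fixed 2-face areas and all 2-faces Euclidean, the natural idea is to realize the desired $Q_5(t)$ as a \emph{cone} over $Q_4(t)$ with a carefully chosen apex $v(t)$ placed in an enlarged pseudo-Euclidean space $\mathbb{R}^{p,5-p}$ in which $\mathbb{R}^{3,1}$ sits as a $4$-dimensional subspace (so that $p\ge 2$ can be met by adding one Euclidean direction).

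First I would fix an isometric embedding $\mathbb{R}^{3,1}\hookrightarrow\mathbb{R}^{p,5-p}$ and parametrize $v(t)$ by its ambient coordinates, allowing it to move with $t$. The 2-faces of $Q_5(t)$ then split into the ten ``base'' 2-faces inherited from $Q_4(t)$, whose constant areas and Euclidean character are guaranteed by Theorem~\ref{theorem_flexible_pseudo_Euclidean_n_4}, and the ten new 2-faces $\{v(t),a_i(t),a_j(t)\}$ indexed by the edges of $Q_4$. The conditions to impose on $v(t)$ are then: (i) each new 2-face lies in a Euclidean plane; (ii) each new 2-face has a prescribed constant area; and (iii) the $5$-volume of $Q_5(t)$ varies with $t$.

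The main obstacle will be to solve (ii): naively, ten area equations against the five-dimensional family of apex positions (plus the single-parameter residual deformation of $Q_4$) looks overdetermined. The key step is therefore to exploit the specific structure of the $4$-simplex produced by Theorem~\ref{theorem_flexible_pseudo_Euclidean_n_4}, most plausibly a discrete symmetry acting on the vertex set that is preserved throughout the deformation, so that many of the ten area equations collapse into a much smaller number of independent ones. Should a direct cone construction resist such a reduction, an alternative route is to build $Q_5(t)$ from scratch by the same underlying technique---for instance via an explicit Gram-matrix ansatz in $\mathbb{R}^{p,5-p}$ whose appropriate minors encode the Cayley--Menger identities for the required 2-face areas, with one free deformation parameter surviving by dimension count once the Euclideanness of all twenty 2-planes is imposed.

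Finally, to verify (iii) I would compute $\frac{d}{dt}\operatorname{vol}(Q_5(t))$ at a generic value of the deformation parameter and show that it does not vanish identically. In the cone picture, the signed $5$-volume equals $\tfrac{1}{5}$ times the $4$-volume of the base times the signed distance from $v(t)$ to the hyperplane of $Q_4(t)$; since the $4$-volume of $Q_4(t)$ is already known to vary (by the $n=4$ case of Theorem~\ref{theorem_flexible_volume_non_constant}), non-constancy of the $5$-volume should follow generically, with any accidental cancellation removed by a small perturbation of the ansatz for $v(t)$.
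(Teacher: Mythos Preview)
Your proposal is a plan rather than a proof, and its main route---the cone construction---has a gap you yourself identify but do not close. With the apex $v(t)$ contributing only five new Gram-matrix entries (the inner products $v\cdot a_i$ and $v\cdot v$), the ten new area constraints are genuinely overdetermined, and the specific $Q_4(t)$ built in Theorem~\ref{theorem_flexible_pseudo_Euclidean_n_4} carries no vertex symmetry strong enough to collapse ten equations to five: the edge data $ta_i^2$, $\tfrac{1}{t}b_j^2$, $ta_i^2+\tfrac{1}{t}b_j^2$ are pairwise distinct for generic parameters, so the automorphism group of $Q_4(t)$ is trivial. Your fallback ``Gram-matrix ansatz'' is pointed in the right direction but is not yet a construction.

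The paper does not cone. It reuses the decomposition $Q=tQ_1+\tfrac{1}{t}Q_2$ of Theorem~\ref{theorem_flexible_pseudo_Euclidean_n_4} \emph{directly} at the level of pseudo $5$-simplices: each $Q_k$ is the same zero-area pseudo $3$-simplex $Q_0$ of Example~\ref{example_simplex_3}, now regarded as a pseudo $5$-simplex by making \emph{two} pairs of its six vertices coincide (so $Q_1$ identifies $\{0,5\}$ and $\{3,4\}$, while $Q_2$ identifies $\{0,1\}$ and $\{2,3\}$). Every $2$-face of each $Q_k$ is then automatically degenerate, so Lemma~\ref{lemma_matrix_2_by_2} forces all twenty $2$-face areas of $tQ_1+\tfrac{1}{t}Q_2$ to be constant in $t$---no system of equations needs to be solved. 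A short determinant computation gives $\det C=-16a_1^2a_2a_3b_2b_4b_5^2(a_2a_3t+b_2b_4/t)$, which is nonzero and nonconstant, yielding both the realization in $\mathbb{R}^{p,5-p}$ with $p\ge 2$ and the nonconstant volume. The idea you were missing is that doubling up vertices lets one manufacture pseudo $5$-simplices whose $2$-faces \emph{all} have zero area, so that Lemma~\ref{lemma_matrix_2_by_2} handles all twenty constraints simultaneously.
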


While the main theme in this paper is about codimension \emph{two} faces,
Theorem~\ref{theorem_flexible_simplex_5_face_2} is somewhat an even more surprising result than 
Theorem~\ref{theorem_flexible_pseudo_Euclidean_n_4},
because the number of volume constraints, which is $\binom{6}{3}=20$, 
is even \emph{bigger} than the degrees of freedom that is $\binom{6}{2}=15$. 
But the method we use in Theorem~\ref{theorem_flexible_simplex_5_face_2} 
is almost the same as that of Theorem~\ref{theorem_flexible_pseudo_Euclidean_n_4}
without much extra effort.
Gaifullin~\cite[Corollary 1.3]{Gaifullin:volume} showed that for a \emph{generic} simplex in
$\mathbb{R}^n$ with $n\ge 5$, any simplex with the same areas of the corresponding 2-faces
is congruent to it, and the algebraic approach should work for the pseudo-Euclidean spaces as well,
except that there may be a finite congruence classes instead of just one congruence class.
Then again, our counterexamples for $n=5$ in $\mathbb{R}^{p,5-p}$ 
must happen only at the \emph{non}-generic situation, in the sense of Gaifullin.

\subsection{Why \emph{pseudo}-Euclidean spaces}

For any $n$-dimensional simplex $Q$ in $\mathbb{R}^n$, by the Cayley--Menger determinant, 
the square of the volume of any face is a polynomial in these variables of the squares of the edge lengths,
where the variables are algebraically independent.
In an algebraic sense, if those variables are allowed to be any real values (including negative numbers and zero), 
though $Q$ may not be realized in $\mathbb{R}^n$,
we can still use the same polynomial (but then need to take the absolute value)
to \emph{define} the ``square of the volume'' of the face of $Q$.
(Once $Q$ can be realized in some, in fact, \emph{any}, pseudo-Euclidean space,
then this definition of the ``volume'' of the face of $Q$ will be justified.)
This inspires us, for both Theorem~\ref{theorem_flexible_pseudo_Euclidean_n_4}
and \ref{theorem_flexible_pseudo_Euclidean},
to first focus on assigning some appropriate values (maybe negative or zero) to those variables
such that the ``volumes'' of all codimension 2 faces remain constant during the deformation,
and then concern about in what pseudo-Euclidean space the  $n$-simplices can be realized.

The main difference between the Euclidean and the pseudo-Euclidean space is that,
as far as this paper is concerned, instead of working on positive semi-definite matrices,
we extend the domain to symmetric matrices.
Beside a definition of the metric and the volume in the pseudo-Euclidean space,
no knowledge of pseudo-Riemannian geometry is assumed of the reader.

\section{Preliminaries}

\subsection{Some notions}

For $0\le p\le n$, $\mathbb{R}^{p,n-p}$ is an $n$-dimensional vector space endowed with a bilinear product
\[x\cdot y=x_1y_1+\cdots +x_py_p-x_{p+1}y_{p+1}-\cdots -x_ny_n,
\]
which induces a metric $ds^2=dx_1^2+\cdots +dx_p^2-dx_{p+1}^2-\cdots -dx_n^2$.

For a $k$-simplex $G$ in $\mathbb{R}^{p,n-p}$, choose any vertex as a base point, 
let $u_i$ $(1\le i\le k)$ be the vectors from this vertex to other $k$ vertices.

\begin{definition}
\label{definition_square_length}
We refer to $u_i^2$ (as well as $(u_i-u_j)^2$) as the \emph{square of the edge length},
for both the Euclidean and pseudo-Euclidean cases,
no matter $u_i^2$ is positive, negative or zero.
\end{definition}

The \emph{length} of the edge, $(u_i^2)^{1/2}$, is either a non-negative number or the product of
a non-negative number by the imaginary unit $i$.
But in this paper we almost never use the notion of the \emph{length} alone 
and always use the \emph{square} of the length instead,
so we do not need to worry about the imaginary unit $i$.
For $G$ (with the base point chosen), we obtain the corresponding Gram matrix
\begin{equation}
\label{equation_simplex_matrix}
(u_i\cdot u_j)_{1\le i,j\le k}.
\end{equation}
As
\begin{equation}
\label{equaiton_dot_product_edge_squared}
(u_i-u_j)^2=u_i^2+u_j^2-2u_i\cdot u_j,
\end{equation}
so by the matrix $(u_i\cdot u_j)_{1\le i,j\le k}$ itself, without knowing more details of $u_i$,
we can recover all the information of the squares of the edge lengths of $G$.
As a result,  the Gram determinant $\det(u_i\cdot u_j)_{1\le i,j\le k}$ 
is a polynomial in the variables of the squares of the edge lengths of $G$.
Due to the non-positive definiteness of $\mathbb{R}^{p,n-p}$, unlike in the Euclidean case,
here $\det(u_i\cdot u_j)_{1\le i,j\le k}$ may be negative or zero.

\begin{definition}
\label{definition_volume}
The \emph{volume}, or the $k$-volume, of $G$ is defined by
\begin{equation}
\label{equation_volume_pseudo_Euclidean}
V_k(G):=|\det(u_i\cdot u_j)_{1\le i,j\le k}|^{1/2}/k!.
\end{equation}
\end{definition}

This definition of the volume is the same as the integral of the ``absolute value'' 
of the induced volume element on the domain, 
e.g., if $k=n$, then the $n$-volume of $G$ is the integral of $dx_1\cdots dx_n$ over the domain of $G$.
If the induced metric on the domain is degenerate, then the volume is zero. 

\begin{definition}
Let the \emph{signed square of the volume} of $G$ be $\det(u_i\cdot u_j)_{1\le i,j\le k}/(k!)^2$.
\end{definition}

We add \emph{signed} to the name to address the sign issue of $\det(u_i\cdot u_j)_{1\le i,j\le k}$.
By convention, for $k=2$ the notion of 2-volume is also interchangeable with the notion of ``area''.

\begin{remark}
We caution that for $k=1$, the notion of 1-volume is \emph{not} the same as ``length'' 
in this paper (they are the same in the Euclidean case), but the absolute value of the length instead.
This is because as $u_i^2$'s are variables to calculate the higher dimensional volumes,
it is more natural to refer to $u_i^2$, by convention, as the ``square of the edge length'' 
(as we did in Definition~\ref{definition_square_length}),
rather than rename it to the ``\emph{signed} square of the edge length''.
To summarize, for $k=1$, the ``signed square of the 1-volume'' is the same as the square of the length.
\end{remark}

The volume is additive and invariant under isometries of $\mathbb{R}^{p,n-p}$,
and the volume of $G$ (\ref{equation_volume_pseudo_Euclidean})
is independent of the base point chosen.
Our definitions of the length and the volume agree with the definitions in 
Alexandrov~\cite{Alexandrov:Minkowski},
which were introduced to analyze flexible polyhedron in the Minkowski space;
and in that paper a notion of the \emph{angle} in the Minkowski 2-plane was also introduced,
but for our purpose we do not need this notion of \emph{Minkowski angle}.

\subsection{Basic properties}

In the Euclidean space $\mathbb{R}^n$, if a $k$-simplex is non-degenerate,%
\footnote{By degenerate we mean that the vertices of the $k$-simplex 
are confined to a lower $(k-1)$-dimensional plane.
}
then the matrix $(u_i\cdot u_j)_{1\le i,j\le k}$ is positive definite and 
$\det(u_i\cdot u_j)_{1\le i,j\le k}$ is positive.

\begin{remark}
\label{remark_zero_volume_non_degenerate}
In $\mathbb{R}^{p,n-p}$, if $k<n$, we caution that if $\det(u_i\cdot u_j)_{1\le i,j\le k}$ is zero,
though it implies that $G$ has zero volume, $G$ may still be \emph{non}-degenerate.
This happens when $G$ is contained in a $k$-dimensional plane (in $\mathbb{R}^{p,n-p}$)
that has degenerate metric.
But if $\det(u_i\cdot u_j)_{1\le i,j\le k}$ is non-zero, 
then $G$ is always non-degenerate.
\end{remark}

The following property in the reverse direction will be used throughout this paper. 

\begin{lemma}
\label{lemma_symmetric_matrix_decomposition}
For every real symmetric $n\times n$ matrix $C$, if $\det(C)$ is non-zero,
then $C$ may be treated as the Gram matrix $(u_i\cdot u_j)_{1\le i,j\le n}$
of some linearly independent vectors $u_1, \dots, u_n$ of a pseudo-Euclidean space $\mathbb{R}^{p,n-p}$,
where $p$ is the number of positive eigenvalues of $C$.
Particularly if $C$ is positive definite, then $p=n$.
\end{lemma}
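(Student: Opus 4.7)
The plan is to reduce the statement to the spectral theorem for real symmetric matrices followed by a diagonal rescaling that normalizes eigenvalues to $\pm 1$. Since $C$ is real and symmetric, the spectral theorem furnishes an orthogonal matrix $O$ and a diagonal matrix $D=\diag(\lambda_1,\ldots,\lambda_n)$ with $C=O^T D O$. The hypothesis $\det(C)\ne 0$ ensures that no $\lambda_i$ vanishes, and after permuting the basis (which is again an orthogonal change of coordinates) I may order the eigenvalues so that $\lambda_1,\ldots,\lambda_p>0$ and $\lambda_{p+1},\ldots,\lambda_n<0$, with $p$ by definition the number of positive eigenvalues of $C$.

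Next I absorb the magnitudes into a rescaling, leaving only the signs on the diagonal. Setting $\Sigma:=\diag\bigl(|\lambda_1|^{1/2},\ldots,|\lambda_n|^{1/2}\bigr)$ and $J:=\diag(1,\ldots,1,-1,\ldots,-1)$ with $p$ ones followed by $n-p$ minus ones, one has $D=\Sigma J\Sigma$, and hence $C=M^T J M$ with $M:=\Sigma O$. Interpreting $J$ as the Gram matrix of the standard basis of $\mathbb{R}^{p,n-p}$ and taking $u_1,\ldots,u_n$ to be the columns of $M$, the identity $C=M^T J M$ reads entrywise $u_i\cdot u_j=C_{ij}$, so $C$ is realized as the Gram matrix of these vectors.

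Linear independence is automatic since $M=\Sigma O$ is a product of two invertible matrices (the diagonal entries of $\Sigma$ are all positive, and $O$ is orthogonal), so its columns form a basis of $\mathbb{R}^{p,n-p}$. The parenthetical case in which $C$ is positive definite corresponds to all $\lambda_i$ being positive, so that $p=n$. I expect no genuine obstacle: the entire content is careful bookkeeping of signs so that the signature of the bilinear form on $\mathbb{R}^{p,n-p}$ matches the spectral signature of $C$. In particular, because $p$ is defined spectrally at the outset rather than read off from the factorization, no independent appeal to Sylvester's law of inertia is needed.
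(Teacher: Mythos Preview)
Your proof is correct and follows essentially the same route as the paper: diagonalize $C$ by an orthogonal matrix, factor the diagonal of eigenvalues as $\Sigma J\Sigma$ with $J=\diag(\pm 1)$, and take the columns of $\Sigma O$ as the vectors $u_i$ in $\mathbb{R}^{p,n-p}$. Your treatment is slightly more explicit about linear independence and about why no separate appeal to Sylvester's law is needed, but the argument is the same.
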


\begin{proof}
As $C$ is symmetric, so $C$ may be decomposed as $B^TDB$ where $B$ is an orthogonal matrix $B^TB=I$,
and $D$ is a diagonal matrix of the (real) eigenvalues of $C$.
As $\det(C)$ is non-zero, so all the eigenvalues of $C$ are non-zero
and we assume the diagonal entries of $D$ contain $p$ positive 
followed by $n-p$ negative entries for some $p$. 
If $C$ is positive definite, then it is well known that $p=n$.
By scaling, $D$ can be written as $S^TD_0S$,
where $D_0$ is a diagonal matrix whose diagonal entries are 1's followed by $(-1)$'s,
and $S$ is also a diagonal matrix. Then $C$=$(SB)^TD_0(SB)$.
Let the $n$ columns of $SB$ be $u_1,\dots,u_n$,
then $C=(u_i\cdot u_j)_{1\le i,j\le n}$, where $u_i\cdot u_j:=u_i^TD_0u_j$,
and $u_1,\dots,u_n$ are linearly independent vectors in $\mathbb{R}^{p,n-p}$.
\end{proof}

\begin{remark}
\label{remark_symmetric_matrix_decomposition}
If $\det(C)=0$, we can modify the proof above slightly and show that $C$ 
can still be realized as a Gram matrix of vectors $u_1, \dots, u_n$ of some pseudo-Euclidean space,
but the $p$ may not be uniquely determined.
\end{remark}

We have the following simple but important observation, which is crucial for this paper.

\begin{lemma}
\label{lemma_matrix_2_by_2}
For $2\times 2$ matrices $A=\begin{pmatrix} a_1^2 & a_1a_2 \\ a_1a_2 & a_2^2 \end{pmatrix}$
and $B=\begin{pmatrix} b_1^2 & b_1b_2 \\ b_1b_2 & b_2^2 \end{pmatrix}$, 
let $C=tA+\frac{1}{t}B$. Then $\det(C)$ is independent of the value of $t>0$, 
and is positive unless $a_1b_2=a_2b_1$.
\end{lemma}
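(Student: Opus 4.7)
The plan is to exploit the fact that both $A$ and $B$ have rank $1$: indeed $A = aa^T$ and $B = bb^T$ for the column vectors $a = (a_1,a_2)^T$ and $b = (b_1,b_2)^T$, so that $\det(A) = \det(B) = 0$.

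First I would apply the standard multilinear expansion
\[
\det(P+Q) = \det(P) + \det(Q) + \bigl(P_{11}Q_{22} + Q_{11}P_{22} - P_{12}Q_{21} - Q_{12}P_{21}\bigr)
\]
for arbitrary $2\times 2$ matrices $P$ and $Q$, applied with $P = tA$ and $Q = \frac{1}{t}B$. Since $\det(A) = \det(B) = 0$, the $\det(P)$ and $\det(Q)$ terms drop out. Each of the remaining cross terms pairs one entry of $tA$ with one entry of $\frac{1}{t}B$, so the factors of $t$ and $1/t$ cancel exactly. This already yields the $t$-independence of $\det(C)$ for free, with essentially no computation.

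It then remains to identify the surviving cross-term expression. A direct evaluation shows that it equals $a_1^2 b_2^2 + a_2^2 b_1^2 - 2 a_1 a_2 b_1 b_2 = (a_1 b_2 - a_2 b_1)^2$, which is a non-negative real number, and strictly positive precisely when $a_1 b_2 \ne a_2 b_1$. There is no real obstacle in the argument; the substance of the lemma lies in the structural observation that the rank-$1$ form of $A$ and $B$ forces all $t$-dependence in $\det\!\bigl(tA+\tfrac{1}{t}B\bigr)$ to cancel, leaving behind a Plücker-type invariant of the two vectors $a$ and $b$.
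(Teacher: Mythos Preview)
Your argument is correct and is essentially the same as the paper's: both use $\det(A)=\det(B)=0$ to eliminate the $t^2$ and $1/t^2$ contributions in $\det(tA+\tfrac{1}{t}B)$, then compute the surviving cross term as $(a_1b_2-a_2b_1)^2$. Your explicit use of the bilinear expansion formula and the rank-one factorizations $A=aa^T$, $B=bb^T$ just make the same reasoning slightly more structured.
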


\begin{proof}
Because $\det(A)=0$ and $\det(B)=0$, so $\det(C)$ does not have the $t^2$ and $\frac{1}{t^2}$ terms.
This leaves $\det(C)$ with only the constant term (as a product of $t$ and $\frac{1}{t}$)
\[a_1^2b_2^2+a_2^2b_1^2-2a_1a_2b_1b_2=(a_1b_2-a_2b_1)^2,
\]
which is positive unless $a_1b_2=a_2b_1$.
\end{proof}

\begin{remark}
\label{remark_geometric_interpretation}
Here is a geometric interpretation of Lemma~\ref{lemma_matrix_2_by_2}.
By the discussion above,  unless $a_1b_2=a_2b_1$, for $t>0$ the $2\times 2$ matrix $C$ is positive definite.
For any fixed $t>0$, 
$C$ determines a Euclidean triangle up to congruence.
Then Lemma~\ref{lemma_matrix_2_by_2} means that, though the shape of the triangle varies over $t$,
the square of the area of the triangle is independent of the value of $t$.
\end{remark}

Lemma~\ref{lemma_matrix_2_by_2} plays a key role in the proofs of
both Theorem~\ref{theorem_flexible_pseudo_Euclidean_n_4}
and \ref{theorem_flexible_pseudo_Euclidean},
where both proofs are essentially built on how to apply Lemma~\ref{lemma_matrix_2_by_2} properly
to find counterexamples that can be decomposed into two degenerate components.
The meanings of both \emph{decompose} and \emph{degenerate} are problem-specific.

The paper is organized as follows.
We apply Lemma~\ref{lemma_matrix_2_by_2}
to prove Theorem~\ref{theorem_flexible_pseudo_Euclidean_n_4}
(as well as Theorem~\ref{theorem_flexible_simplex_5_face_2}) in a more direct way.
For Theorem~\ref{theorem_flexible_pseudo_Euclidean},
we first transform it into equivalent questions about the \emph{dual} of the $n$-simplex,
and then apply Lemma~\ref{lemma_matrix_2_by_2} \emph{indirectly} to prove 
Theorem~\ref{theorem_flexible_pseudo_Euclidean}.
The proofs of Theorem~\ref{theorem_flexible_pseudo_Euclidean_n_4}
and \ref{theorem_flexible_pseudo_Euclidean} are constructive for all $n\ge 4$.

\section{Proofs of Theorem~\ref{theorem_flexible_pseudo_Euclidean_n_4}
and \ref{theorem_flexible_simplex_5_face_2}}

Recall that an $n$-simplex in $\mathbb{R}^{p,n-p}$, with a base point chosen, 
corresponds to an $n\times n$ Gram matrix $(u_i\cdot u_j)_{1\le i,j\le n}$ 
(symmetric, but not necessarily positive-definite) as in (\ref{equation_simplex_matrix}).
By (\ref{equaiton_dot_product_edge_squared}) the matrix also determines the 
squares of all the edge lengths of the simplex (and vice versa), and the relationship is \emph{linear}.
This linear relationship can be extended to be between \emph{any} symmetric matrix
and what we call a \emph{pseudo $n$-simplex} (with a base point chosen and the vertices ordered).
We can assign any real values (including negative numbers and zero) 
to the squares of the edge lengths of a pseudo simplex,
without concerning about in what pseudo-Euclidean space it can be embedded.
A simplex in $\mathbb{R}^{p,n-p}$ is automatically a pseudo simplex.
Just like symmetric matrices, pseudo simplices form a vector space.
Namely, with respect to the matrix operations, pseudo simplices can take a \emph{summation} by adding 
the squares of the corresponding edge lengths together,
or similarly, take a product with a scalar $t$.
Compared to the corresponding symmetric matrix, 
pseudo simplex is particularly helpful to ``visualize'' the faces that do not contain the base point.

By Lemma~\ref{lemma_symmetric_matrix_decomposition}
and Remark~\ref{remark_symmetric_matrix_decomposition},
every pseudo simplex can be realized in some pseudo-Euclidean space.
As the volume of the realized simplex is completely determined 
by the squares of the edge lengths and does not depend on the particular embedding,
we can use the same formula (but then need to take the absolute value)
to assign a ``volume'' to the pseudo simplex, and for its faces as well
(see Definition~\ref{definition_volume}).

\subsection{Proof of Theorem~\ref{theorem_flexible_pseudo_Euclidean_n_4}}

Our approach to prove Theorem~\ref{theorem_flexible_pseudo_Euclidean_n_4}
is to first find two pseudo 4-simplices $Q_1$ and $Q_2$ with the properties:
(1) all the 2-faces of $Q_1$ and $Q_2$ have zero areas, and
(2) the summation of $tQ_1$ and $\frac{1}{t}Q_2$ for $t>0$, denoted by $Q$, 
has non-zero volume and can be realized in $\mathbb{R}^{3,1}$ (except for some $t>0$).%
\footnote{If $Q$ can be realized in $\mathbb{R}^4$ instead (replacing $\mathbb{R}^{3,1}$),
then we would have found counterexamples to Question~\ref{question_simplex_rigid} for $n=4$.
}

\begin{remark}
If both $Q_1$ and $Q_2$ can be realized as ``five points on a line'',
then they easily satisfy condition (1), but not (2) as they both correspond to $4\times 4$ matrices with rank 1,
then $Q$ corresponds to a $4\times 4$ matrix whose rank is at most 2 and thus $Q$ has zero volume.
So we need to search for more ``complicated'' $Q_1$ and $Q_2$.
\end{remark}

We first construct a pseudo 3-simplex $Q_0$ whose 2-faces all have zero-areas but its volume is non-zero,
which will be used to construct pseudo 4-simplices $Q_1$ and $Q_2$ later.

\begin{example}
\label{example_simplex_3}
For some non-zero $a_1$, $a_2$ and $a_3$ satisfying $a_1+a_2+a_3=0$, 
let $Q_0$ be a pseudo 3-simplex (with the vertices numbered from 0 to 3)
who has equal squares of edge lengths on opposite sides, 
with values of $a_1^2$, $a_2^2$ and $a_3^2$ 
(they are also the 3 sides of each 2-face, see Figure~\ref{figure_simplex_3}).
It can be checked that all the 2-faces have zero areas, but the volume of $Q_0$ is \emph{non}-zero,
which is crucial for the construction.
\end{example}

\begin{figure}[h]
\centering
\resizebox{.25\textwidth}{!}
 {\input{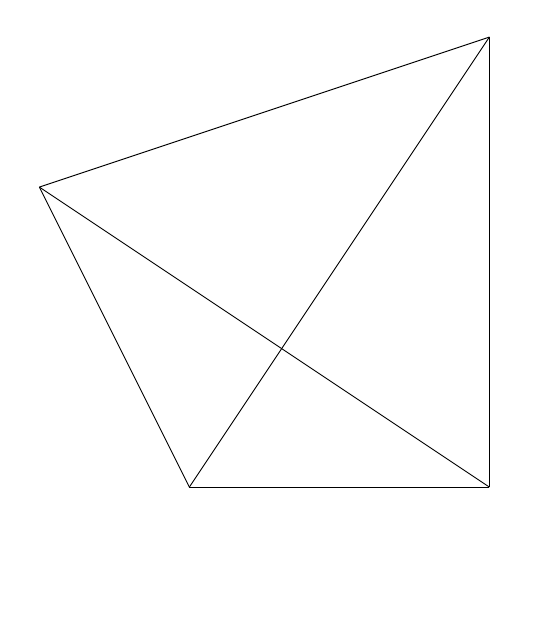_t}}
\caption{Pseudo 3-simplex $Q_0$ with squares of edge lengths}
\label{figure_simplex_3}
\end{figure}

Guided by Example~\ref{example_simplex_3},
now for some non-zero $a_1$, $a_2$ and $a_3$ satisfying $a_1+a_2+a_3=0$,
let $Q_1$ be a pseudo 4-simplex (with the vertices numbered from 0 to 4, 
but 0 and 4 are placed on the same node)
whose squares of edge lengths are labeled in Figure~\ref{figure_simplex_4} (a).
Similarly for some non-zero $b_2$, $b_3$ and $b_4$ satisfying $b_2+b_3+b_4=0$, 
let $Q_2$ be a pseudo 4-simplex (with the vertices also numbered from 0 to 4, 
but 0 and 1 are placed on the same node) 
whose squares of edge lengths are labeled in Figure~\ref{figure_simplex_4} (b).

\begin{figure}[h]
\centering
\resizebox{.6\textwidth}{!}
  {\input{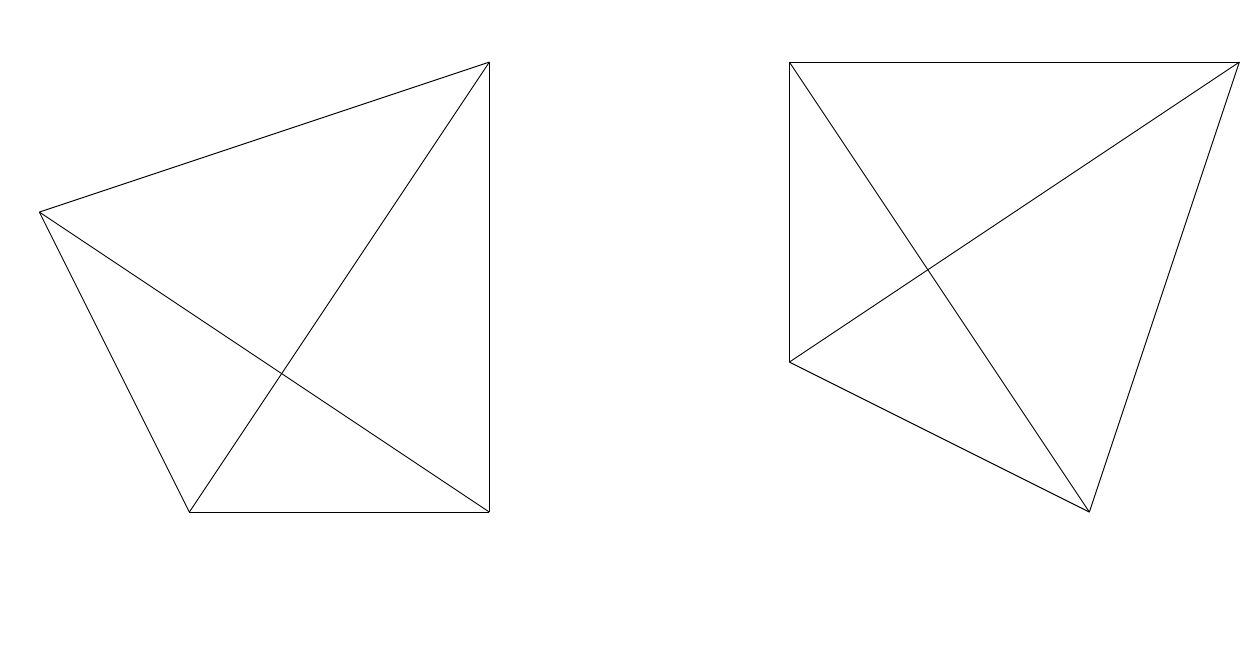_t}}
\caption{Pseudo 4-simplices $Q_1$ and $Q_2$ with squares of edge lengths}
\label{figure_simplex_4}
\end{figure}

Choose vertex 0 as the base point for both $Q_1$ and $Q_2$.
As $a_1+a_2+a_3=0$, so
\[(a_1^2+a_2^2-a_3^2)/2=(a_1^2+a_2^2-(-a_1-a_2)^2)/2=-a_1a_2, 
\]
and so on (and similarly for $b_2$, $b_3$ and $b_4$).
Thus the corresponding matrices of $Q_1$ and $Q_2$ are 
\begin{equation*}
{\footnotesize
A=
\begin{pmatrix} 
a_1^2 & -a_1a_2 & -a_1a_3 & 0 \\ 
-a_1a_2 & a_2^2 & -a_2a_3 & 0 \\
-a_1a_3 & -a_2a_3 & a_3^2 & 0 \\
0 & 0 & 0 & 0
\end{pmatrix}
\quad\text{and}\quad
B=
\begin{pmatrix} 
0 & 0 & 0 & 0 \\
0 & b_2^2 & -b_2b_3 & -b_2b_4 \\
0 & -b_2b_3 & b_3^2 & -b_3b_4 \\
0 & -b_2b_4 & -b_3b_4 & b_4^2 
\end{pmatrix}
.
}
\end{equation*}

It is easy to check that all the 2-faces of $Q_1$ and $Q_2$ have zero areas.
Now let a pseudo 4-simplex $Q$ be $tQ_1+\frac{1}{t}Q_2$ for $t>0$,
then its corresponding matrix is $tA+\frac{1}{t}B$, which we denote by $C$.

\begin{lemma}
\label{lemma_flexible_pseudo_4}
For some appropriate non-zero $a_1$, $a_2$ and $a_3$ satisfying $a_1+a_2+a_3=0$
and non-zero $b_2$, $b_3$ and $b_4$ satisfying $b_2+b_3+b_4=0$, 
except for some $t>0$, all the 2-faces of $Q$ have fixed positive signed squares of the areas,
and $\det(C)$ is negative and not a constant over $t$.
\end{lemma}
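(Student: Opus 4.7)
The plan is to establish two separate facts: first, that every 2-face of $Q$ has signed squared area independent of $t$ and positive; second, that $\det(C)$ is a non-constant Laurent polynomial in $t$ that is negative for all $t>0$ except on a discrete set. Both will reduce almost entirely to Lemma~\ref{lemma_matrix_2_by_2}, the only outside ingredient being a one-variable sign analysis at the end.

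For the face analysis I would enumerate the ten 2-faces of $Q$. The six containing vertex $0$ correspond to the $2\times 2$ principal submatrices $C_{\{i,j\}}=tA_{\{i,j\}}+\frac{1}{t}B_{\{i,j\}}$, and it is immediate from the explicit form of $A$ and $B$ that each such $A_{\{i,j\}}$ (resp.\ $B_{\{i,j\}}$) is an outer product $vv^{T}$ (resp.\ $ww^{T}$) of the type required by Lemma~\ref{lemma_matrix_2_by_2}; for example $A_{\{1,2\}}=(a_1,-a_2)^{T}(a_1,-a_2)$ and $B_{\{2,4\}}=(b_2,-b_4)^{T}(b_2,-b_4)$. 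So the lemma yields $\det C_{\{i,j\}}$ as the square of a bilinear expression in the $a_i$'s and $b_j$'s, independent of $t$. For each of the four 2-faces not containing vertex $0$, I would pick a vertex of the face as the new basepoint and compute its $2\times 2$ Gram matrix $G$ via $G_{jk}=C_{jk}-C_{ij}-C_{ik}+C_{ii}$; here the sum-to-zero conditions $a_1+a_2+a_3=0$ and $b_2+b_3+b_4=0$ are what force the cross terms in the $A$- and $B$-parts of $G$ to collapse into outer-product form. For instance, for the face $\{1,2,3\}$ with basepoint $1$ one obtains
\[
G \;=\; t\begin{pmatrix}a_3\\-a_2\end{pmatrix}(a_3,\,-a_2) \;+\; \frac{1}{t}\begin{pmatrix}b_2\\-b_3\end{pmatrix}(b_2,\,-b_3),
\]
so Lemma~\ref{lemma_matrix_2_by_2} again delivers $\det G=(a_2b_2-a_3b_3)^{2}$. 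The other three faces are handled identically, and the ten resulting squared bilinear expressions are all strictly positive for generic choices subject to the two constraints.

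For $\det(C)$, expanding the $4\times 4$ determinant by multilinearity in the rows gives
\[
\det(C)=\sum_{I\subseteq\{1,2,3,4\}}t^{\,2|I|-4}\,\det(M_I),
\]
where $M_I$ takes rows indexed by $I$ from $A$ and the remaining rows from $B$. Because row $4$ of $A$ and row $1$ of $B$ vanish identically, only $I\in\{\{1\},\{1,2\},\{1,3\},\{1,2,3\}\}$ contribute, and hence
\[
\det(C)=\alpha\, t^{2}+\beta+\gamma\, t^{-2}.
\]
A row--column factoring of $A_{\{1,2,3\},\{1,2,3\}}$ (pulling an $a_i$ from each row and each column) reduces its determinant to that of a $\pm 1$ matrix and gives $\det A_{\{1,2,3\},\{1,2,3\}}=-4\,a_1^{2}a_2^{2}a_3^{2}$, hence $\alpha=-4\,a_1^{2}a_2^{2}a_3^{2}b_4^{2}<0$; by symmetry $\gamma=-4\,a_1^{2}b_2^{2}b_3^{2}b_4^{2}<0$. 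Already this shows $\det(C)$ is not constant.

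To close the argument, set $u=t^{2}>0$; the function $f(u)=\alpha u+\beta+\gamma/u$ tends to $-\infty$ as $u\to 0^{+}$ or $u\to\infty$ and attains its maximum $\beta-2\sqrt{\alpha\gamma}$ at $u_{*}=\sqrt{\gamma/\alpha}$, so it suffices to pick parameters making this maximum non-positive. I expect that the symmetric choice $a_1=a_2=b_2=b_3=1$, $a_3=b_4=-2$ will work after a direct evaluation of the two $|I|=2$ minors contributing to $\beta$; any small generic perturbation preserving both sum-to-zero constraints should then yield strict negativity if desired. The main obstacle in the plan is the rank-one decomposition of $G$ for the four 2-faces not through vertex $0$: the sum-to-zero conditions are exactly what is needed to collapse the cross terms into outer-product form, but checking this for each face involves a short calculation that cannot be avoided.
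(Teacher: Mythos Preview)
Your approach is essentially the paper's: Lemma~\ref{lemma_matrix_2_by_2} for the ten $2$-faces, and a Laurent expansion of $\det(C)$ using the vanishing of row~$4$ of $A$ and row~$1$ of $B$. The one place you stop short is the computation of $\beta$, which you defer to a numerical check; the paper instead computes it directly (via the complementary-minor form of the expansion) as $\beta=-8a_1^{2}a_2a_3b_2b_3b_4^{2}$, and then observes the factorization
\[
\det(C)=-4a_1^{2}b_4^{2}\Bigl(a_2a_3\,t+b_2b_3\,\tfrac{1}{t}\Bigr)^{2}.
\]
This makes the sign analysis trivial for \emph{all} admissible parameters (negative except at the single $t$ with $t^{2}=-b_2b_3/(a_2a_3)$, when that is positive), so your maximum-of-$f(u)$ argument and the specific numerical choice become unnecessary. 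I'd recommend simply carrying out the $\beta$ computation: it closes the argument more cleanly than the route you sketch, and it also explains why your symmetric choice lands exactly on the borderline $\beta-2\sqrt{\alpha\gamma}=0$ rather than strictly below it.
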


\begin{proof}
As all the 2-faces of $Q_1$ and $Q_2$ have zero areas,
so by Lemma~\ref{lemma_matrix_2_by_2} and Remark~\ref{remark_geometric_interpretation},
all the 2-faces of $Q$ have fixed \emph{non-negative} signed squares of the areas for $t>0$.
To make them all positive, e.g., for the 2-face with the vertices 0, 2, and 3, 
(by Lemma~\ref{lemma_matrix_2_by_2} again) we need to exclude the case of $a_2b_3=a_3b_2$, and so on.
Once those cases are excluded, then all the 2-faces of $Q$
have fixed \emph{positive} signed squares of the areas.

As $C=tA+\frac{1}{t}B$, so $\det(C)$ can be expressed as a sum of signed products
such that each summand is the product of a minor of $tA$ and the 
``complement'' minor of $\frac{1}{t}B$.
As $\det(A)=\det(B)=0$, so in $\det(C)$ there are no $t^4$ and $\frac{1}{t^4}$ terms.
The $t^2$ term is the product of the upper left principal minor (of order 3) of $tA$ and $\frac{b_4^2}{t}$,
which is $-4a_1^2a_2^2a_3^2b_4^2t^2$.
Similarly the $\frac{1}{t^2}$ term is the produce of $a_1^2t$ and
the lower right principal minor (of order 3) of $\frac{1}{t}B$,
which is $-4a_1^2b_2^2b_3^2b_4^2\frac{1}{t^2}$.
The constant term, with only two equal non-zero summands, is 
\[-2\det\begin{pmatrix} a_1^2 & -a_1a_2 \\ -a_1a_3 & -a_2a_3 \end{pmatrix}
\cdot\det\begin{pmatrix} -b_2b_3 & -b_2b_4 \\ -b_3b_4 & b_4^2 \end{pmatrix}
=-8a_1^2a_2a_3b_2b_3b_4^2.
\]
So
\begin{align*}
\det(C)
&=-4a_1^2a_2^2a_3^2b_4^2t^2-4a_1^2b_2^2b_3^2b_4^2\frac{1}{t^2}
-8a_1^2a_2a_3b_2b_3b_4^2 \\
&=-4a_1^2b_4^2(a_2a_3t+b_2b_3\frac{1}{t})^2,
\end{align*}
and except for $t^2=-\frac{b_2b_3}{a_2a_3}$,
$\det(C)$ is negative and is not a constant over $t$.
\end{proof}

We are now ready to prove Theorem~\ref{theorem_flexible_pseudo_Euclidean_n_4}, which we recall below.

\begingroup
\def\thetheorem{\ref{theorem_flexible_pseudo_Euclidean_n_4}}
\begin{theorem}
For $n=4$ in $\mathbb{R}^{3,1}$,
there exists a continuous family of non-congruent $4$-simplices $Q$ with fixed areas of all the 2-faces,
and all the 2-faces are in Euclidean planes.
\end{theorem}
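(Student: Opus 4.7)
The plan is to build the required $4$-simplex as the realization, in $\mathbb{R}^{3,1}$, of the pseudo $4$-simplex $Q(t) = tQ_1 + \frac{1}{t}Q_2$ already produced in Lemma~\ref{lemma_flexible_pseudo_4}, and then to read off all three assertions of the theorem (fixed $2$-face areas, $2$-faces in Euclidean planes, continuous family of non-congruent simplices) from sign properties of its Gram matrix $C(t) = tA + \frac{1}{t}B$.

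First I would verify that every squared edge length of $Q(t)$ is strictly positive. Inspection of $A$ and $B$ shows that the only edge with vanishing squared length in $Q_1$ is $(0,4)$, while in $Q_2$ it is $(0,1)$; since these are distinct edges, each edge of $Q(t)$ picks up a strictly positive contribution from at least one of $tA$ or $\frac{1}{t}B$. Combined with the positivity of all signed squares of $2$-face areas from Lemma~\ref{lemma_flexible_pseudo_4}, this forces every $2\times 2$ Gram matrix of a $2$-face (using any vertex of the face as base point) to have positive diagonal entries and positive determinant, hence to be positive definite. This already establishes the Euclidean-planes assertion; and applied to the six $2$-faces through the base vertex $0$, it says that every $2\times 2$ principal submatrix of $C(t)$ is positive definite.

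Next I would use Cauchy interlacing to pin down the signature of $C(t)$. Writing the eigenvalues of $C(t)$ as $\lambda_1 \ge \lambda_2 \ge \lambda_3 \ge \lambda_4$, interlacing against any $2\times 2$ principal submatrix gives $\lambda_2 \ge \mu_2 > 0$, so $\lambda_1, \lambda_2 > 0$. Together with $\det C(t) = \lambda_1\lambda_2\lambda_3\lambda_4 < 0$ from Lemma~\ref{lemma_flexible_pseudo_4}, this forces $\lambda_3 > 0 > \lambda_4$, i.e.\ $C(t)$ has signature $(3,1)$. Lemma~\ref{lemma_symmetric_matrix_decomposition} then realizes $C(t)$ as the Gram matrix of four linearly independent vectors in $\mathbb{R}^{3,1}$, producing a non-degenerate $4$-simplex $Q(t) \subset \mathbb{R}^{3,1}$ with the prescribed edge-length squares.

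Finally, the continuous family and non-congruence claims are immediate: $C(t)$ depends continuously on $t$, and since $\det C(t) = -4 a_1^2 b_4^2 \bigl(a_2 a_3 t + b_2 b_3/t\bigr)^2$ is non-constant in $t$ by Lemma~\ref{lemma_flexible_pseudo_4}, the $4$-volumes $|\det C(t)|^{1/2}/4!$ are non-constant, so no two simplices in the family can be congruent. The only technical caveat is to exclude any $t > 0$ with $\det C(t) = 0$, i.e.\ $t^2 = -b_2b_3/(a_2a_3)$ when that is positive, which is easily arranged by a sign choice of the parameters or by restricting to an open subinterval of $(0,\infty)$. This last point is the only mild obstacle; the substantive work has already been absorbed into Lemma~\ref{lemma_matrix_2_by_2} and Lemma~\ref{lemma_flexible_pseudo_4}, and everything else is a routine signature-and-realization argument.
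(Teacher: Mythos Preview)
Your proposal is correct and follows essentially the same approach as the paper: build $Q(t)$ from Lemma~\ref{lemma_flexible_pseudo_4}, determine the signature of $C(t)$ to be $(3,1)$, and realize via Lemma~\ref{lemma_symmetric_matrix_decomposition}. The only difference is cosmetic: where the paper argues geometrically that a Euclidean $2$-face forces $p\ge 2$ and then uses the parity of $4-p$ from $\det C<0$ to conclude $p=3$, you phrase the same step via Cauchy interlacing against a $2\times 2$ principal submatrix; your explicit check that all squared edge lengths are positive is a small extra care the paper leaves implicit.
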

\addtocounter{theorem}{-1}
\endgroup

\begin{proof}
For the family of pseudo $4$-simplices $Q$ we construct above, 
by Lemma~\ref{lemma_flexible_pseudo_4}, $\det(C)$ is negative except for some $t>0$.
So by Lemma~\ref{lemma_symmetric_matrix_decomposition},
for a continuous interval of $t$,
$Q$ can be realized as a family of $4$-simplices (we still call it $Q$) 
in a pseudo-Euclidean space $\mathbb{R}^{p,4-p}$ 
where $p$ is the number of positive eigenvalues of $C$.
By Lemma~\ref{lemma_flexible_pseudo_4} again, 
all the 2-faces of $Q$ have fixed positive signed squares of the areas,
so they are in Euclidean planes and thus $p\ge 2$.
Since $\det(C)$ is negative and is the product of the (real) eigenvalues of $C$, 
so $4-p$, the number of negative eigenvalues of $C$, is odd. As $p\ge 2$, so $p$ is 3. 
Therefore $Q$ can be realized in $\mathbb{R}^{3,1}$. 
\end{proof}

Notice also that $\det(C)$ is not a constant over $t$, so the volume of $Q$ does not remain constant,
proving Theorem~\ref{theorem_flexible_volume_non_constant} for $n=4$.

\begin{corollary}[Theorem~\ref{theorem_flexible_volume_non_constant}, for $n=4$]
\label{corollary_flexible_volume_non_constant_4}
For $n=4$ in $\mathbb{R}^{3,1}$,
there exists a continuous family of non-congruent $4$-simplices $Q$
with fixed areas of all the 2-faces,
but the volume of $Q$ does not remain constant.
\end{corollary}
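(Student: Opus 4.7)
The plan is to reuse the continuous family of $4$-simplices already constructed in the proof of Theorem~\ref{theorem_flexible_pseudo_Euclidean_n_4} and simply read off the volume information that was produced as a by-product along the way. By Definition~\ref{definition_volume}, the $4$-volume of $Q$ is $V_4(Q)=|\det(C)|^{1/2}/4!$, where $C=tA+\tfrac{1}{t}B$ is the Gram matrix of $Q$ based at vertex $0$. Hence proving that $V_4(Q)$ varies with the parameter $t$ reduces to proving that $\det(C)$ varies with $t$.

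Both ingredients are already available. First, Theorem~\ref{theorem_flexible_pseudo_Euclidean_n_4} gives a continuous interval of parameters $t>0$ on which $Q$ is realized as a genuine $4$-simplex in $\mathbb{R}^{3,1}$ with all $2$-face areas fixed. Second, Lemma~\ref{lemma_flexible_pseudo_4} computes $\det(C)$ explicitly as
\[
\det(C)=-4a_1^2b_4^2\left(a_2a_3\,t+b_2b_3\,\tfrac{1}{t}\right)^2.
\]
Because the parameters $a_1,a_2,a_3,b_2,b_3,b_4$ are all non-zero, the inner expression $a_2a_3\,t+b_2b_3/t$ is a non-constant rational function of $t$, and therefore so is its square; in particular $|\det(C)|^{1/2}$ is non-constant on any non-trivial subinterval of $t>0$.

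Combining these two observations, $V_4(Q)=|\det(C)|^{1/2}/4!$ is a non-constant function of $t$ on the continuous interval where $Q$ lives in $\mathbb{R}^{3,1}$, which is exactly the statement of the corollary. Essentially no new obstacle arises: Lemma~\ref{lemma_flexible_pseudo_4} already did all the work, and the only cosmetic point is to restrict, if necessary, to a subinterval avoiding the isolated value $t^2=-b_2b_3/(a_2a_3)$ (at which $\det(C)=0$), which was in any case already excluded in Theorem~\ref{theorem_flexible_pseudo_Euclidean_n_4}.
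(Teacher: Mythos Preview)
Your proposal is correct and follows essentially the same approach as the paper: the paper simply remarks, immediately after proving Theorem~\ref{theorem_flexible_pseudo_Euclidean_n_4}, that $\det(C)$ is not a constant over $t$ (as established in Lemma~\ref{lemma_flexible_pseudo_4}), so the volume of $Q$ does not remain constant. Your write-up fills in the same argument with a bit more detail, but there is no substantive difference.
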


\subsection{Proof of Theorem~\ref{theorem_flexible_simplex_5_face_2}}

The construction below is almost the same as above without much change,
but we need to redefine some of the notations.

For some non-zero $a_1$, $a_2$ and $a_3$ satisfying $a_1+a_2+a_3=0$,
let $Q_1$ be a pseudo 5-simplex (with the vertices numbered from 0 to 5, 
but 0 and 5 are placed on the same node and 3 and 4 are placed on the same node)
whose squares of edge lengths are labeled in Figure~\ref{figure_simplex_5} (a).
Similarly for some non-zero $b_2$, $b_4$ and $b_5$ satisfying $b_2+b_4+b_5=0$, 
let $Q_2$ be a pseudo 5-simplex (with the vertices also numbered from 0 to 5, 
but 0 and 1 are placed on the same node and 2 and 3 are placed on the same node)
whose squares of edge lengths are labeled in Figure~\ref{figure_simplex_5} (b).
 
\begin{figure}[h]
\centering
\resizebox{.6\textwidth}{!}
  {\input{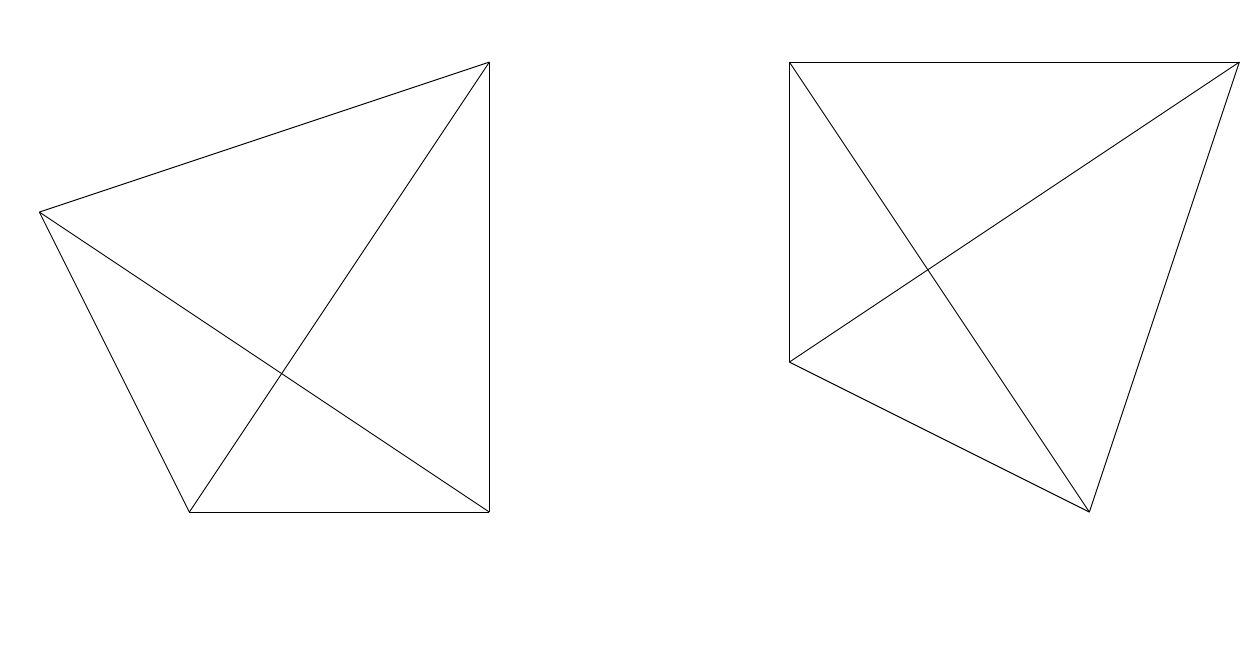_t}}
\caption{Pseudo 5-simplices $Q_1$ and $Q_2$ with squares of edge lengths}
\label{figure_simplex_5}
\end{figure}

Choose vertex 0 as the base point for both $Q_1$ and $Q_2$.
As $a_1+a_2+a_3=0$ and $b_2+b_4+b_5=0$,
the corresponding matrices of $Q_1$ and $Q_2$ are 
\begin{equation*}
{\footnotesize
A=
\begin{pmatrix} 
a_1^2 & -a_1a_2 & -a_1a_3 & -a_1a_3 & 0 \\ 
-a_1a_2 & a_2^2 & -a_2a_3 & -a_2a_3 & 0 \\
-a_1a_3 & -a_2a_3 & a_3^2 & a_3^2 & 0 \\
-a_1a_3 & -a_2a_3 & a_3^2 & a_3^2 & 0 \\
0 & 0 & 0 & 0 & 0
\end{pmatrix}
\quad\text{and}\quad
B=
\begin{pmatrix} 
0 & 0 & 0 & 0 & 0 \\
0 & b_2^2 & b_2^2 & -b_2b_4 & -b_2b_5 \\
0 & b_2^2 & b_2^2 & -b_2b_4 & -b_2b_5 \\
0 & -b_2b_4 & -b_2b_4 & b_4^2 & -b_4b_5 \\
0 & -b_2b_5 & -b_2b_5 & -b_4b_5 & b_5^2 
\end{pmatrix}
.
}
\end{equation*}

As before, all the 2-faces of $Q_1$ and $Q_2$ have zero areas.
Now let a pseudo 5-simplex $Q$ be $tQ_1+\frac{1}{t}Q_2$ for $t>0$,
then its corresponding matrix is $tA+\frac{1}{t}B$, which we denote by $C$.

\begin{lemma}
\label{lemma_flexible_pseudo_5}
For some appropriate non-zero $a_1$, $a_2$ and $a_3$ satisfying $a_1+a_2+a_3=0$
and non-zero $b_2$, $b_4$ and $b_5$ satisfying $b_2+b_4+b_5=0$, except for some $t>0$,
all the 2-faces of $Q$ have fixed positive signed squares of the areas,
and $\det(C)$ is non-zero and not a constant over $t$.
\end{lemma}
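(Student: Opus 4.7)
The plan is to follow the proof of Lemma~\ref{lemma_flexible_pseudo_4} almost verbatim, only with the larger pseudo 5-simplices in place of the 4-simplices. First I verify that every 2-face of $Q_1$ (and similarly $Q_2$) has zero area: the $2\times 2$ principal minors of $A$ corresponding to 2-faces through vertex 0 vanish by direct inspection, while for a 2-face of $Q_1$ not containing vertex 0 one reads off from Figure~\ref{figure_simplex_5}(a) that its three squared edge lengths are a permutation of $a_i^2$, $a_j^2$, $a_k^2$ with $\pm a_i\pm a_j\pm a_k=0$, so that triangle is degenerate. Applying Lemma~\ref{lemma_matrix_2_by_2} together with Remark~\ref{remark_geometric_interpretation} to each of the $\binom{6}{3}=20$ 2-faces of $Q = tQ_1 + \tfrac{1}{t}Q_2$ then shows that every 2-face of $Q$ has a signed squared area which is independent of $t>0$, non-negative in every case, and strictly positive once the finitely many degenerate coincidences $a_i b_j = a_j b_i$ are excluded.

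For $\det(C)$ I expand by multilinearity in columns:
\[
\det(C) = \sum_{S\subseteq\{1,\dots,5\}} t^{|S|}\bigl(\tfrac{1}{t}\bigr)^{5-|S|}\det(M_S),
\]
where $M_S$ is the matrix whose $i$-th column is taken from $A$ if $i\in S$ and from $B$ otherwise. The crucial structural observations are that the $5$-th column of $A$ is zero and its $3$-rd and $4$-th columns coincide, while dually the $1$-st column of $B$ is zero and its $2$-nd and $3$-rd columns coincide. A short bookkeeping check then shows that every $M_S$ contains either a zero column or two repeated columns---and therefore contributes nothing---except when $S\in\{\{1,2,3\},\{1,2,4\}\}$ (feeding the coefficient of $t$) or $S\in\{\{1,2\},\{1,3\}\}$ (feeding the coefficient of $1/t$). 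In particular the coefficients of $t^{\pm 5}$, $t^{\pm 3}$, and $t^0$ all vanish, leaving $\det(C)=c_2/t+c_3\, t$.

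To evaluate each of the four surviving minors I perform one row operation that exploits two nearly-equal rows (those corresponding to the doubled vertices of $Q_1$ or $Q_2$) and use $a_1+a_2+a_3=0$ or $b_2+b_4+b_5=0$ to reduce the computation to the $3\times 3$ Gram determinant of $Q_0$ from Example~\ref{example_simplex_3}, whose value is $-4 a_1^2 a_2^2 a_3^2$ (or its $b$-analogue). This should yield
\[
c_3 = -16\,a_1^2 a_2^2 a_3^2\, b_2 b_4 b_5^2, \qquad c_2 = -16\,a_1^2 a_2 a_3\, b_2^2 b_4^2 b_5^2,
\]
hence $\det(C) = -16\,a_1^2 a_2 a_3\, b_2 b_4 b_5^2\,(a_2 a_3\, t + b_2 b_4/t)$. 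This is a non-trivial Laurent polynomial in $t$, so not constant, and it vanishes only at $t^2=-b_2 b_4/(a_2 a_3)$ when that value is positive, i.e.\ at most at one $t>0$. Generic choices of the parameters make both $c_2$ and $c_3$ non-zero, completing the proof. The main obstacle is purely combinatorial: pruning the $2^5=32$ terms of the expansion and tracking signs and column positions in the four surviving minors; once that is done, each evaluation is routine, because $Q_1$ and $Q_2$ are engineered so that collapsing their coincident vertices drops everything back onto the rank-$3$ calculation already done for $Q_0$.
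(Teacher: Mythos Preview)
Your proof is correct and follows the same strategy as the paper, which for this lemma simply states the formula for $\det(C)$ and refers back to the $n=4$ computation. You supply considerably more detail than the paper does: your column-multilinearity expansion with the pruning based on the zero/repeated columns of $A$ and $B$ is a clean way to isolate the four surviving terms, and your final expression $\det(C)=-16\,a_1^2a_2a_3\,b_2b_4b_5^2\,(a_2a_3\,t+b_2b_4/t)$ matches the paper exactly. One small imprecision: not every 2-face of $Q_1$ avoiding vertex $0$ has squared edge lengths that are a genuine permutation of three distinct $a_i^2$'s---faces through the coincident pair $\{3,4\}$ have a zero edge---but these are trivially degenerate as well, so the conclusion stands.
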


\begin{proof}
As before, once some cases (of the values of $a_i$'s and $b_j$'s) are excluded,
then all the 2-faces of $Q$ have fixed positive signed squares of the areas.

As $C=tA+\frac{1}{t}B$, so $\det(C)$, 
compared to how we computed in the previous section for $n=4$, can be shown to be
\[\det(C)=-16a_1^2a_2a_3b_2b_4b_5^2(a_2a_3t+b_2b_4\frac{1}{t}).
\]
Except for $t^2=-\frac{b_2b_4}{a_2a_3}$, $\det(C)$ is non-zero and not a constant over $t$.
\end{proof}

For the family of pseudo $5$-simplices $Q$ we construct above, 
by Lemma~\ref{lemma_flexible_pseudo_5}, $\det(C)$ is non-zero except for some $t>0$.
So by Lemma~\ref{lemma_symmetric_matrix_decomposition}, for a continuous interval of $t$,
$Q$ can be realized as a family of $5$-simplices (we still call it $Q$)
in a pseudo-Euclidean space $\mathbb{R}^{p,5-p}$ for some $p$.
By Lemma~\ref{lemma_flexible_pseudo_5} again, 
all the 2-faces of $Q$ have fixed positive signed squares of the areas,
so they are in Euclidean planes and thus $p\ge 2$.
As $\det(C)$ is not a constant over $t$, so the volume of $Q$ does not remain constant.
This proves Theorem~\ref{theorem_flexible_simplex_5_face_2}, which we recall below.

\begingroup
\def\thetheorem{\ref{theorem_flexible_simplex_5_face_2}}
\begin{theorem}
For $n=5$, in $\mathbb{R}^{p,5-p}$ for some unspecified $p\ge 2$,
there exists a continuous family of non-congruent $5$-simplices $Q$ with fixed areas of all the 2-faces,
and all the 2-faces are in Euclidean planes, but the volume of $Q$ does not remain constant.
\end{theorem}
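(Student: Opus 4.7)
The plan is to mirror the strategy used for Theorem~\ref{theorem_flexible_pseudo_Euclidean_n_4}: construct two pseudo 5-simplices $Q_1, Q_2$ whose corresponding Gram matrices $A$, $B$ are highly degenerate (in particular, all 2-faces of $Q_1$ and of $Q_2$ have zero area), then pass to the one-parameter family $Q = tQ_1 + \tfrac{1}{t}Q_2$ and invoke Lemma~\ref{lemma_matrix_2_by_2} face-by-face to see that all 2-face areas of $Q$ are independent of $t$; finally use Lemma~\ref{lemma_symmetric_matrix_decomposition} to realize $Q$ as a genuine simplex in some $\mathbb{R}^{p,5-p}$ with $p \ge 2$.

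The concrete idea for $Q_1$ and $Q_2$ is to exploit the ``triangle with $a_1 + a_2 + a_3 = 0$'' trick of Example~\ref{example_simplex_3}, which already produces all-zero 2-face areas at the level of pseudo 3-simplices, and embed this trick into a 5-simplex structure by collapsing vertex pairs. For $Q_1$ I would take parameters $a_1, a_2, a_3$ with $a_1 + a_2 + a_3 = 0$ and identify the pairs $\{0,5\}$ and $\{3,4\}$ at the level of the pseudo simplex, so that only three geometrically ``active'' nodes remain. For $Q_2$ I would do the symmetric thing with $b_2, b_4, b_5$ satisfying $b_2 + b_4 + b_5 = 0$ and identify the pairs $\{0,1\}$ and $\{2,3\}$. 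The key point in choosing these identifications is that the active edges of $Q_1$ and of $Q_2$ overlap in just the right way so that $C := tA + \tfrac{1}{t}B$ is not trivially rank-deficient, while each $2\times 2$ principal block of $A$ and of $B$ is of the rank-one outer-product form appearing in Lemma~\ref{lemma_matrix_2_by_2} (using the identity $(a_1^2 + a_2^2 - a_3^2)/2 = -a_1 a_2$ and its analogues to see that off-diagonal entries factor as $\pm a_i a_j$ or $\pm b_i b_j$). Applying that lemma to the $2\times 2$ principal minors (for 2-faces through the base point) and, by the same reasoning after a change of base point, to the $2\times 2$ Gram matrices of 2-faces not containing vertex $0$, one concludes $t$-independence of every 2-face area of $Q$. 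A generic choice of $a_i, b_j$ (avoiding the equality conditions $a_i b_j = a_j b_i$) makes every such area strictly positive, so each 2-face sits in a Euclidean plane, and this forces $p \ge 2$ in the final realization.

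The main obstacle is the determinant calculation for $C = tA + \tfrac{1}{t}B$, which has to be shown to be non-zero for a continuous interval of $t$ \emph{and} non-constant in $t$. Since $\det(A) = \det(B) = 0$ (each is built from rank-one outer products sitting in complementary blocks, with additional degeneracy coming from the pair-collapses), many terms in the Leibniz expansion of $\det(C)$ vanish and only a few mixed products of a minor of $tA$ with the complementary minor of $\tfrac{1}{t}B$ survive. By analogy with the $n=4$ identity $\det(C) = -4a_1^2 b_4^2 (a_2 a_3\, t + b_2 b_3/t)^2$, I expect the $n=5$ answer to be a Laurent polynomial of the form $\det(C) = -16 a_1^2 a_2 a_3 b_2 b_4 b_5^2 \bigl(a_2 a_3\, t + b_2 b_4/t\bigr)$ (or something similar up to relabeling) --- linear in $t$ and in $1/t$ with no constant term, vanishing only at one positive value $t^2 = -b_2 b_4/(a_2 a_3)$, and manifestly non-constant in $t$. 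Once this computation is carried out, Lemma~\ref{lemma_symmetric_matrix_decomposition} realizes $Q$ in $\mathbb{R}^{p,5-p}$ on a continuous interval of $t$, the Euclidean 2-faces force $p \ge 2$, and the non-constancy of $\det(C)$ gives the non-constancy of the $5$-volume of $Q$, completing the proof of Theorem~\ref{theorem_flexible_simplex_5_face_2}.
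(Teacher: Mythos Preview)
Your proposal is correct and matches the paper's proof essentially step for step: the same vertex-pair collapses $\{0,5\},\{3,4\}$ for $Q_1$ and $\{0,1\},\{2,3\}$ for $Q_2$, the same parameter constraints $a_1+a_2+a_3=0$ and $b_2+b_4+b_5=0$, and even the exact determinant formula $\det(C)=-16a_1^2a_2a_3b_2b_4b_5^2(a_2a_3t+b_2b_4/t)$ that the paper obtains in Lemma~\ref{lemma_flexible_pseudo_5}. The only work left is the bookkeeping of writing out $A$ and $B$ explicitly and verifying that determinant, which the paper does directly.
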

\addtocounter{theorem}{-1}
\endgroup

\section{Equivalent questions of Question~\ref{question_simplex_rigid}}

To prove Theorem~\ref{theorem_flexible_pseudo_Euclidean}
(about an $n$-simplex $Q$ in $\mathbb{R}^{p,n-p}$ for some $p$),
we first reformulate Question~\ref{question_simplex_rigid} 
(about an $n$-simplex $Q$ in $\mathbb{R}^n$)
to an equivalent form about the \emph{dual} of $Q$ in $\mathbb{R}^n$,
and this process also (self-)explains that how we come to formulate 
Theorem~\ref{theorem_flexible_pseudo_Euclidean} in the first place.

For an $n$-simplex $Q$ in $\mathbb{R}^n$, without loss of generality,
we assume that the origin $O$ is the centroid of the vertices of $Q$ 
(or simply call it the centroid of $Q$).
This assumption is not essential for $Q$ 
because the volume of any face of $Q$ is invariant under translation, 
but becomes more important when the dual of $Q$ is introduced next.
For $1\le i\le n+1$, denote $F_i$ an $(n-1)$-face of $Q$,
and $F_{ij}$ ($i\ne j$) the $(n-2)$-face on the intersection of $F_i$ and $F_j$.

For some $c>0$, let an $n$-simplex $P$ be a \emph{dual} of $Q$ 
\begin{equation}
\label{equation_dual_Euclidean}
P:=\{y \in \mathbb{R}^n: x\cdot y \le c \quad\text{for all $x\in Q$}\}.
\end{equation}
When $c=1$, $P$ is the \emph{polar dual} $Q^{\ast}$ of $Q$.
Here $c$ may vary when $Q$ varies, but the value of $c$ is not important and can be adjusted later.
Let the vertices of $P$ be $P_i$ and $v_i=\overrightarrow{OP_i}$,
then $v_i$ is an outward normal vector to $Q$ at $F_i$.
As $O$ is the centroid of $Q$, it can be shown that $\norm{v_i}$, defined by $|v_i^2|^{1/2}$,
is proportional to the $(n-1)$-volume of $F_i$.
The Minkowski relation for the facet volumes of $Q$ states that $\sum v_i=0$, 
so $O$ is also the centroid of $P$.

It is well known that the area of the triangle $OP_iP_j$ is proportional to the $(n-2)$-volume of $F_{ij}$
(e.g., see Lee~\cite[Theorem 19]{Lee:stress})
\[V_{n-2}(F_{ij})=\frac{c_2V_2(OP_iP_j)}{V_n(P)}\cdot c^{n-2},
\]
where $c_2$ is a constant that only depends on $n$.
So Question~\ref{question_simplex_rigid} is equivalent to the following question about $P$.

\begin{question}
\label{question_simplex_triangle_rigid}
For any $n$-simplex $P$ in $\mathbb{R}^n$ ($n\ge 4$) with the centroid fixed at the origin $O$, 
if a continuous deformation preserves the areas of all triangles $OP_iP_j$,
then is it necessarily a rigid motion?
\end{question}

Note that the square of the area of the triangle $OP_iP_j$, multiplied by a factor of 4, 
is the determinant of the $2\times 2$ matrix
\begin{equation}
\begin{pmatrix} v_i^2 & v_i\cdot v_j \\ v_i\cdot v_j & v_j^2 \end{pmatrix},
\end{equation}
where $v_i\cdot v_j$ is the inner product of $v_i$ and $v_j$ in $\mathbb{R}^n$.
Conversely, this $2\times 2$ matrix, without knowing more details of $v_i$ and $v_j$, 
can recover $\overrightarrow{P_iP_j}^2$ as $v_i^2+v_j^2-2v_i\cdot v_j$.

If we treat $v_i$ as a column $n$-vector and let $V=(v_1,\dots, v_{n+1})$ be an $n\times (n+1)$ matrix
and $U:=V^TV=(v_i^T v_j)_{1\le i,j\le n+1}$, then $U$ is an $(n+1)\times (n+1)$ positive semi-definite matrix.
Notationwise $v_i^T v_j$ and $v_i\cdot v_j$ are the same thing in the \emph{Euclidean} space,
so $U=(v_i\cdot v_j)_{1\le i,j\le n+1}$, and all the principal minors of order 2 of $U$
are the squares of the areas of triangles $OP_iP_j$, multiplied by 4.

Denote $\mathbf{1}$ the  $(n+1)$-vector $(1,\dots,1)^T$.
Because the $v_i$'s span the entire $\mathbb{R}^n$ and $\sum v_i=0$,
so $V$ has rank $n$ and $V\cdot\mathbf{1}=0$,
and thus $U=V^TV$ has rank $n$ and contains $\mathbf{1}$ in the null space. 

\begin{definition}
\label{definition_matrix_positive_definite}
Let $\mathcal{F}$ (resp. $\mathcal{F}_0$) be the set of $(n+1)\times (n+1)$ positive semi-definite matrices 
(resp. with rank $n$) with the vector $\mathbf{1}$ in the null space.
\end{definition}

The following result in the reverse direction is also true.

\begin{lemma}
\label{lemma_matrix_simplex}
Let $U\in\mathcal{F}_0$, then $U$ can be treated as the Gram matrix $(v_i\cdot v_j)_{1\le i,j\le n+1}$ 
of some affinely independent vectors $v_1, \dots, v_{n+1}$ of $\mathbb{R}^n$,
and $\sum_{i=1}^{n+1}v_i=0$.
\end{lemma}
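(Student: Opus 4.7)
The plan is to realize the $v_i$ as the columns of an $n \times (n+1)$ matrix $V$ satisfying $V^T V = U$, which reduces the statement to a factorization problem for positive semi-definite matrices. This mirrors the approach of Lemma~\ref{lemma_symmetric_matrix_decomposition}, with the simplification that $U$ is positive semi-definite (rather than merely symmetric) and with the added information that $\mathbf{1}$ lies in $\ker U$. Concretely, I would diagonalize $U = B^T D B$ with $B$ orthogonal and $D = \diag(d_1, \dots, d_n, 0)$ where $d_i > 0$ (positive semi-definiteness rules out negative eigenvalues, and $\rank U = n$ fixes exactly one zero eigenvalue). Then set $V$ to be the $n \times (n+1)$ matrix whose $i$-th row is $\sqrt{d_i}$ times the $i$-th row of $B$; a direct check gives $V^T V = U$, and taking $v_i$ to be the $i$-th column of $V$ yields vectors in $\mathbb{R}^n$ with $v_i \cdot v_j = U_{ij}$, which is precisely the Gram condition required.

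The relation $\sum v_i = 0$ is equivalent to $V \mathbf{1} = 0$, and this follows from $U \mathbf{1} = 0$: indeed, $\|V \mathbf{1}\|^2 = \mathbf{1}^T V^T V \mathbf{1} = \mathbf{1}^T U \mathbf{1} = 0$. For affine independence, suppose $\sum c_i v_i = 0$ and $\sum c_i = 0$; the first relation means $c = (c_1, \dots, c_{n+1})^T \in \ker V$. Since the rows of $V$ are scalings by the \emph{nonzero} numbers $\sqrt{d_i}$ of the first $n$ (orthonormal) rows of $B$, they are linearly independent, so $V$ has rank $n$ and $\ker V$ is one-dimensional. Combined with the already-verified $V \mathbf{1} = 0$, this forces $c = \lambda \mathbf{1}$ for some scalar $\lambda$, and then the constraint $\sum c_i = 0$ gives $\lambda = 0$, proving affine independence.

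The argument is essentially linear-algebra bookkeeping, so no step should present a substantive obstacle. The only spot that deserves care is verifying $\rank V = n$, so that the $v_i$'s genuinely span $\mathbb{R}^n$ rather than being confined to a lower-dimensional subspace; but this is transparent from the construction because each $d_i$ used in defining $V$ is strictly positive, which in turn relies on the hypotheses that $U \in \mathcal{F}_0$ is positive semi-definite of rank exactly $n$.
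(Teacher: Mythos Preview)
Your proof is correct and follows precisely the route the paper suggests: the paper skips the proof, saying only that one can ``modify Lemma~\ref{lemma_symmetric_matrix_decomposition} slightly,'' and your argument carries out exactly that modification, exploiting positive semi-definiteness to keep all eigenvalues non-negative and using the rank-$n$ hypothesis together with $\mathbf{1}\in\ker U$ to obtain $\sum v_i=0$ and affine independence.
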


The proof is straightforward 
(e.g., we can modify Lemma~\ref{lemma_symmetric_matrix_decomposition} slightly to prove it), so we skip it. 
For any $U\in\mathcal{F}_0$, by Lemma~\ref{lemma_matrix_simplex} we obtain some $v_i$'s,
and by letting $v_i=\overrightarrow{OP_i}$
we obtain an $n$-simplex $P$ in $\mathbb{R}^n$ with $O$ as the centroid.
As $\overrightarrow{P_iP_j}^2$ depends only on $U$ but not the particular realized $v_i$'s,
so there is a one-to-one correspondence between $U\in\mathcal{F}_0$ 
and $n$-simplices in $\mathbb{R}^n$ (with $\sum_{i=1}^{n+1}v_i=0$) up to congruence.
Thus Question~\ref{question_simplex_triangle_rigid} is equivalent to the following question about the matrix $U$.

\begin{question}
\label{question_simplex_matrix_rigid}
Let $U\in\mathcal{F}_0$ for $n\ge 4$.
If a continuous change of $U$ in $\mathcal{F}_0$ preserves all the principal minors of order 2 of $U$,
then does it necessarily preserve $U$ as well?
\end{question}

As mentioned earlier, we do not solve Question~\ref{question_simplex_rigid}, 
which is equivalent to Question~\ref{question_simplex_triangle_rigid} and \ref{question_simplex_matrix_rigid}.
But it seems natural to consider a variant of Question~\ref{question_simplex_matrix_rigid}
where the restriction of positive semi-definite matrices is replaced by a weaker notion of 
\emph{symmetric matrices}. 

\begin{definition}
\label{definition_matrix_symmetric}
Let $\mathcal{U}$ (resp. $\mathcal{U}_0)$ be the set of $(n+1)\times (n+1)$ symmetric matrices 
(resp. with rank $n$) with the vector $\mathbf{1}$ in the null space
and whose diagonal entries and principal minors of order 2 are all non-negative (resp. positive).
\end{definition}

\begin{remark}
It is easy to check that for any $U\in\mathcal{U}$,
if its principal minors of order 2 are all positive, 
then it automatically implies that the diagonal entries are also all positive.
\end{remark}

For this variant of Question~\ref{question_simplex_matrix_rigid}, we construct counterexamples for $n\ge 5$.

\begin{theorem}[Main Theorem 3]
\label{theorem_symmetric_matrix}
For $n\ge 5$, there exists a continuous family of non-identical 
$(n+1)\times (n+1)$ symmetric matrices $U\in\mathcal{U}_0$,
such that all the principal minors of order 2 remain constant.
\end{theorem}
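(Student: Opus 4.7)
The plan is to realize the continuous family as $U(t) = tA + (1/t)B$ for $t > 0$, where $A$ and $B$ are symmetric $(n+1) \times (n+1)$ matrices with every $2 \times 2$ principal minor equal to zero and both annihilated by $\mathbf{1}$. By Lemma~\ref{lemma_matrix_2_by_2} applied to each index pair $(i,j)$, the order-$2$ principal minor $\det U(t)_{\{i,j\}}$ is then automatically constant in $t$, so the problem reduces to constructing $A$ and $B$ with the additional requirements that $U(t) \in \mathcal{U}_0$ holds on an open interval.

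A symmetric matrix $A$ with $A_{ii} \ge 0$ and every order-$2$ principal minor zero factors as $A = D_\alpha E^A D_\alpha$, where $D_\alpha = \diag(\alpha_i)$ with $\alpha_i \ge 0$ and $E^A$ is a sign matrix with $E^A_{ii} = 1$ and $E^A_{ij} \in \{\pm 1\}$ off the diagonal on the support of $\alpha$. The condition $A\mathbf{1} = 0$ becomes $E^A \alpha = 0$, so $\alpha$ must be a positive null vector of $E^A$. Writing $B = D_\beta E^B D_\beta$ analogously, Lemma~\ref{lemma_matrix_2_by_2} yields
\[
\det U(t)_{\{i,j\}} \;=\; \bigl(\alpha_i \beta_j E^B_{ij} - \alpha_j \beta_i E^A_{ij}\bigr)^2,
\]
a nonnegative constant equal to $(\alpha_i\beta_j - \alpha_j\beta_i)^2$ when $E^A_{ij} = E^B_{ij}$ and to $(\alpha_i\beta_j + \alpha_j\beta_i)^2$ when $E^A_{ij} = -E^B_{ij}$. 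Positivity of every order-$2$ principal minor of $U(t)$ therefore reduces to requiring $(\alpha_i, \beta_i) \not\parallel (\alpha_j, \beta_j)$ whenever $E^A_{ij} = E^B_{ij}$; the opposite-sign case is automatic since $\alpha, \beta > 0$.

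The rank condition follows because $\ker U(t) \supseteq \ker A \cap \ker B \supseteq \operatorname{span}(\mathbf{1})$ for every $t$, while for any $x \notin \ker A \cap \ker B$ the equation $Bx = -t^2 A x$ has at most one solution in $t^2$; thus for all but finitely many $t$ in any bounded interval $\ker U(t) = \ker A \cap \ker B$. A generic choice of distinct sign patterns will make this intersection equal to $\operatorname{span}(\mathbf{1})$, yielding $\rank U(t) = n$.

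The main obstacle will be constructing, for each $n \ge 5$, sign-pattern matrices $E^A, E^B$ each of rank strictly greater than $1$ and each admitting a positive null vector. The rank-$1$ (balanced) case $E = \sigma\sigma^T$ always admits such vectors but yields $\rank U \le 2$, which is too small; unbalanced patterns with positive kernel vectors are far more restrictive. An explicit example for $n = 5$ is given by two $6$-cycle sign patterns in $K_6$: take $F^A$ to be the cycle $1{-}2{-}3{-}4{-}5{-}6{-}1$ of positive edges with $\alpha = \mathbf{1}$, and $F^B$ to be $1{-}3{-}5{-}2{-}4{-}6{-}1$ with $\beta = (1,1,2,2,3,3)$; each resulting $E$ has rank $3$ with $\mathbf{1}$ in its null space, the kernels intersect only in $\operatorname{span}(\mathbf{1})$, and a pair-by-pair verification shows that all fifteen order-$2$ principal minors of $U(t)$ are positive constants. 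For general $n \ge 5$, analogous cyclic or $k$-regular unbalanced constructions (with $\alpha$ constant when $n$ is odd and non-constant otherwise) should provide the required sign patterns, and producing such a pair uniformly in $n$ is the technical core of the argument.
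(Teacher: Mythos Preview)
Your overall strategy coincides with the paper's: write $U(t)=tA+\tfrac{1}{t}B$ with $A,B\in\mathcal{D}$ and invoke Lemma~\ref{lemma_matrix_2_by_2} entrywise. The decomposition $A=D_\alpha E^A D_\alpha$ and the computation of the order-$2$ minors are correct, and your $n=5$ cycle example is a legitimate alternative to the paper's block pattern $C_{2m}$. But two steps in your argument are genuine gaps, not just missing details.

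\textbf{The rank step is wrong as written.} You argue that ``for any $x\notin\ker A\cap\ker B$ the equation $Bx=-t^{2}Ax$ has at most one solution in $t^{2}$; thus for all but finitely many $t$ \dots\ $\ker U(t)=\ker A\cap\ker B$.'' The first clause is about a \emph{fixed} $x$; it does not bound the union of bad $t$'s over all $x$. Concretely, for the symmetric pair
\[
A=\begin{pmatrix}0&1&0\\1&0&0\\0&0&0\end{pmatrix},\qquad
B=\begin{pmatrix}0&0&1\\0&0&0\\1&0&0\end{pmatrix},
\]
one has $\ker A\cap\ker B=\{0\}$, yet $\det(sA+B)\equiv 0$ and $\ker(sA+B)=\operatorname{span}(0,1,-s)$ is nontrivial for \emph{every} $s$: each $x=(0,1,c)$ produces its own bad value $s=-c$, and these cover all of $\mathbb{R}$. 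So $\ker A\cap\ker B=\operatorname{span}(\mathbf{1})$ does \emph{not} by itself force $\operatorname{rank}U(t)=n$ generically, and your $n=5$ example is therefore not yet verified. The paper handles this by directly exhibiting $n$ linearly independent columns of $A+B$ (Lemmas~\ref{lemma_A_plus_B_U0_even} and~\ref{lemma_A_plus_B_U0_odd}); you need an argument of that kind.

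\textbf{The general construction is missing.} You acknowledge that ``producing such a pair uniformly in $n$ is the technical core of the argument,'' and then leave it at ``analogous cyclic or $k$-regular unbalanced constructions \dots\ should provide the required sign patterns.'' This is exactly where the paper does the work: it gives explicit matrices $A_{2m},B_{2m}$ (for $n$ odd) and $A_{2m+1},B_{2m+1}$ (for $n$ even), and then proves they land in $\mathcal{U}_0$. A cycle pattern on $n+1$ vertices with $\alpha=\mathbf{1}$ only satisfies $E^A\mathbf{1}=0$ when $n+1$ is even, so your template already needs modification for even $n$, and in any case the rank and positivity claims must be established, not asserted.
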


Though Theorem~\ref{theorem_symmetric_matrix} does not fully resolve 
Question~\ref{question_simplex_matrix_rigid}, it is still a somewhat surprising result.
This is because for an $(n+1)\times (n+1)$ symmetric matrix in $\mathcal{U}_0$, 
the degrees of freedom is $\binom{n+1}{2}$
(the same as the number of the principal minors of order 2),
which does not increase the degrees of freedom that we can work on compared 
to the positive semi-definite matrices in $\mathcal{F}_0$.
Our proof of Theorem~\ref{theorem_symmetric_matrix} is constructive, 
and may possibly be used to search for potential counterexamples to
Question~\ref{question_simplex_matrix_rigid} as well,
if they were to exist.

\section{Proof of Theorem~\ref{theorem_symmetric_matrix}}

The following notion will be useful in the proof.

\begin{definition}
Let $\mathcal{D}$ be the set of the $(n+1)\times (n+1)$ symmetric matrices in $\mathcal{U}$
with the vector $\mathbf{1}$ in the null space and whose principal minors of order 2 are all 0.
\end{definition}

\begin{example}
\label{example_symmetric_matrix}
Let $\alpha=(a_1,\dots, a_{n+1})^T$ be a column $(n+1)$-vector that satisfies $\alpha^T\cdot\mathbf{1}=0$,
and $A:=\alpha\alpha^T$ be an $(n+1)\times (n+1)$ symmetric matrix.
Then $A\cdot\mathbf{1}=0$ and $\rank(A)=1$, and $A\in\mathcal{D}$.
\end{example}

Lemma~\ref{lemma_matrix_2_by_2} immediately leads to the following result.

\begin{lemma}
\label{lemma_matrix_non_negative}
If $A, B\in\mathcal{D}$, then for $t>0$, we have $tA+\frac{1}{t}B\in\mathcal{U}$ 
and all the principal minors of order 2 are non-negative constants.
\end{lemma}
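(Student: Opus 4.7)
The plan is to verify, condition by condition, that $C := tA + \frac{1}{t}B$ lies in $\mathcal{U}$, and then show that each order-$2$ principal minor of $C$ is independent of $t$. Symmetry of $C$ is immediate from the symmetry of $A$ and $B$, and $C\cdot\mathbf{1} = tA\cdot\mathbf{1} + \frac{1}{t}B\cdot\mathbf{1} = 0$ since $A,B\in\mathcal{D}\subset\mathcal{U}$. Each diagonal entry of $C$ is $tA_{ii} + \frac{1}{t}B_{ii}$, a positive combination of non-negative numbers (recall that $A,B\in\mathcal{U}$ forces their diagonals to be non-negative), hence non-negative.

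The heart of the matter is the $2\times 2$ principal minors. First I would argue that every $2\times 2$ principal submatrix of $A$ can be put in the form $\begin{pmatrix} a_1^2 & a_1 a_2 \\ a_1 a_2 & a_2^2 \end{pmatrix}$. Indeed, such a submatrix $\begin{pmatrix} x & y \\ y & z \end{pmatrix}$ has $x,z\ge 0$ (diagonal entries of $A\in\mathcal{U}$) and determinant $xz-y^2 = 0$ (since $A\in\mathcal{D}$); choosing $a_1=\sqrt{x}$ and $a_2 = \mathrm{sign}(y)\sqrt{z}$ (with any choice when $y=0$) produces the required factorization. The same applies to each $2\times 2$ principal submatrix of $B$, producing parameters $b_1,b_2$.

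With this factorization in hand, I can apply Lemma~\ref{lemma_matrix_2_by_2} directly to each corresponding pair of $2\times 2$ principal submatrices of $A$ and $B$: the lemma then says that the associated $2\times 2$ principal minor of $C = tA + \frac{1}{t}B$ equals the constant $(a_1 b_2 - a_2 b_1)^2$, which is non-negative and independent of $t>0$. This simultaneously verifies that the principal $2$-minors of $C$ are non-negative (so $C\in\mathcal{U}$) and that they are constants over $t$, completing the proof.

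I do not anticipate a substantive obstacle; the only mildly delicate point is the sign choice in writing the $2\times 2$ submatrix in factored form (and the edge case $x=0$ or $z=0$, which forces $y=0$ and makes the factorization trivial). Once this normal form is observed, the claim reduces to a direct application of Lemma~\ref{lemma_matrix_2_by_2} entry by entry.
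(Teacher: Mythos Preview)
Your proposal is correct and follows exactly the route the paper intends: the paper simply states that Lemma~\ref{lemma_matrix_2_by_2} ``immediately leads to'' this result without further detail, and you have filled in precisely those details—the symmetry, the null-space condition, the non-negativity of diagonals, and the rank-one factorization of each $2\times 2$ principal block needed to invoke Lemma~\ref{lemma_matrix_2_by_2}. Your handling of the edge cases ($x=0$ or $z=0$ forcing $y=0$) is also correct.
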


Note that unlike $\mathcal{F}$ or $\mathcal{U}$, 
the set $\mathcal{D}$ is not closed under summation of matrices.
To further prove Theorem~\ref{theorem_symmetric_matrix}, 
we only need to improve Lemma~\ref{lemma_matrix_non_negative}
by finding suitable $A, B\in\mathcal{D}$ and $t>0$
such that $tA+\frac{1}{t}B\in\mathcal{U}_0$ (instead of only in $\mathcal{U}$).
This is what we plan to do next.

\begin{remark}
\label{remark_matrix_rank}
As $\mathcal{D}$ is closed under multiplication by a positive factor,
so to prove Theorem~\ref{theorem_symmetric_matrix}, 
in fact we can just ignore $t$ and only search for $A, B\in\mathcal{D}$ 
such that $A+B\in\mathcal{U}_0$.
To make $A+B$ have rank $n$, as $\rank(A)+\rank(B)\ge \rank(A+B)$,
so at least one of the ranks of $A$ and $B$, say $\rank(A)$, should be at least $\lceil n/2 \rceil$,
the least integer greater than or equal to $n/2$.
In Example~\ref{example_symmetric_matrix}, we have $\rank(A)=1$, whose rank is too low for our purpose.
For $n=4$, it can be verified (with the proof skipped and left to the interested reader) 
that any non-zero $5\times 5$ matrix in $\mathcal{D}$ has rank 1, 
less than $\lceil n/2 \rceil=2$, so this method does not work for $n=4$.
\end{remark}

\subsection{An outline of the proof}

If $A\in\mathcal{D}$, then $A$ can be written as $D^TCD$ (not necessarily unique),
where $D=\diag(d_1,\dots, d_{n+1})$ (so $D^T=D$)
and $C$ is an $(n+1)\times (n+1)$ symmetric matrix 
with only $\pm 1$ entries and all the diagonal entries are 1.
As $A$ also satisfies $A\cdot\mathbf{1}=0$, so $D^TCD\cdot\mathbf{1}=0$. 
In Example~\ref{example_symmetric_matrix}, the corresponding $C$ of $A=\alpha\alpha^T$
has all entries as 1.

\begin{remark}
\label{remark_zero_entry}
If all the $d_i$'s in $D$ above have at least \emph{two} zero entries, assume $d_1=d_2=0$.
Then no matter how to choose $B\in\mathcal{D}$, by Lemma~\ref{lemma_matrix_2_by_2}, 
in $A+B$ the determinant of the upper-left $2\times 2$ submatrix (a principal minor of order 2) is 0,
so $A+B$ is \emph{not} in $\mathcal{U}_0$.
So in order to find suitable $A,B\in\mathcal{D}$ to prove Theorem~\ref{theorem_symmetric_matrix},
the $d_i$'s can afford to have at most one zero entry.
\end{remark}

\begin{definition}
Let $\mathcal{H}$ be the finite set of all $(n+1)\times (n+1)$ symmetric matrices 
with only $\pm 1$ entries and all the diagonal entries are 1.
\end{definition}

We remark that unlike $\mathcal{F}$, $\mathcal{U}$ or $\mathcal{D}$, a matrix in $\mathcal{H}$
generally does not contain $\mathbf{1}$ in the null space.
To construct suitable  $A\in\mathcal{D}$, two things to keep in mind:
(1) by Remark~\ref{remark_matrix_rank} we want $\rank(A)$ to be at least $\lceil n/2\rceil$, 
and (2) by Remark~\ref{remark_zero_entry} we want the $d_i$'s to contain at most one zero entry.
Now we follow with some concrete examples to construct $A\in\mathcal{D}$.
The treatment is different for $n$ odd and $n$ even.

\subsection{For $n$ odd}

For an odd $n\ge 5$, let $n=2m-1$. So $n+1=2m$ and $m\ge 3$. 
Let $C_0$ be a $m\times m$ matrix whose diagonal entries are all 1 
and all off-diagonal entries are $-1$. 
As $m\ge 3$, $C_0$ has a full rank of $m$ (but not for $m=2$ where $\rank(C_0)=1$).
Now for a $2m\times 2m$ matrix $C_{2m}$, we cut it into some $2\times 2$ blocks (there are $m\times m$ of them),
fill the $m$ diagonal $2\times 2$ blocks with $\begin{pmatrix} 1 & 1 \\ 1 & 1 \end{pmatrix}$,
and the rest blocks with $\begin{pmatrix} -1 & -1 \\ -1 & -1 \end{pmatrix}$.
As $m\ge 3$ and $\rank(C_0)=m$, so $\rank(C_{2m})=m$.
Thus the null space of $C_{2m}$ has dimension $m$ and the form 
\begin{equation}
\label{equation_null_space_C1}
d=(d_1,d_2,\dots,d_{2m-1},d_{2m})^T=(d_1,-d_1,\dots,d_{2m-1},-d_{2m-1})^T.
\end{equation}
Let $D_{2m}=\diag(d)$ and $A_{2m}=D_{2m}^TC_{2m}D_{2m}$.
As $A_{2m}\cdot\mathbf{1}=D_{2m}^T(C_{2m}d)=0$
and the principal minors of order 2 of $A_{2m}$ are all 0,
then $A_{2m}\in\mathcal{D}$.

If all $d_i$ are non-zero, whose values will be chosen later, then $\rank(A_{2m})=\rank(C_{2m})=m$.
So the null space of $A_{2m}$ has dimension $m$ and the form 
\begin{equation}
\label{equation_null_space_A}
(a_1,a_1,\dots,a_{2m-1},a_{2m-1})^T.
\end{equation}

For a $2m\times 2m$ matrix $C'_{2m}$, let it be obtained 
by moving the last row and last column of $C_{2m}$ to the first row and first column.
So we still have $\rank(C'_{2m})=m$, and the null space of $C'_{2m}$ has the form 
\begin{equation}
\label{equation_null_space_C2}
d'=(d'_1,d'_2,d'_3,\dots,d'_{2m})^T
=(-d'_{2m},d'_2,-d'_2,\dots,d'_{2m})^T.
\end{equation}
Let $D'_{2m}=\diag(d')$ and $B_{2m}={D'_{2m}}^TC'_{2m}D'_{2m}$, then similarly $B_{2m}\in\mathcal{D}$.
If all $d'_i$ are non-zero, whose values will also be chosen later,
then $\rank(B_{2m})=\rank(C'_{2m})=m$, and the null space of $B_{2m}$ has the form 
\begin{equation}
\label{equation_null_space_B}
(b_{2m},b_2,b_2,\dots, b_{2m})^T.
\end{equation}

For any vector in the \emph{common} null space of $A_{2m}$ and $B_{2m}$,
by (\ref{equation_null_space_A}) its $(2k-1)$-th and $2k$-th terms are equal,
and by (\ref{equation_null_space_B}) its $2k$-th and $(2k+1)$-th terms are equal,
so all the terms are equal.
So the common null space of $A_{2m}$ and $B_{2m}$ is 1-dimensional that contains $\mathbf{1}$.
The columns $\alpha_i$ $(1\le i\le 2m)$ of $A_{2m}$ satisfy
\begin{equation}
\label{equation_matrix_A_column_even}
\alpha_1=-\alpha_2, \dots, \alpha_{2m-1}=-\alpha_{2m},
\end{equation}
and the columns $\beta_i$  $(1\le i\le 2m)$ of $B_{2m}$ satisfy
\begin{equation}
\label{equation_matrix_B_column_even}
\beta_2=-\beta_3, \dots, \beta_{2m}=-\beta_1.
\end{equation}
As the common null space of $A_{2m}$ and $B_{2m}$ is 1-dimensional,
so the \emph{row} vectors of $A_{2m}$ and $B_{2m}$ 
span a $(2m-1)$-dimensional subspace.
As $A_{2m}$ and $B_{2m}$ are symmetric matrices,
so by (\ref{equation_matrix_A_column_even}) and (\ref{equation_matrix_B_column_even}),
those $2m$ \emph{column} vectors 
\begin{equation}
\label{equation_column_vectors}
\alpha_1, \dots, \alpha_{2m-1}, \beta_2, \dots, \beta_{2m}
\end{equation}
also span a $(2m-1)$-dimensional subspace.

We next show that for appropriate non-zero $d_i$ and $d'_i$,
the null space of $A_{2m}+B_{2m}$ is also 1-dimensional.

\begin{lemma}
\label{lemma_A_plus_B_U0_even}
For $n=2m-1$ with $m\ge 3$, with proper choice of non-zero $d_i$ and $d'_i$,
then $A_{2m}+B_{2m}\in\mathcal{U}_0$.
\end{lemma}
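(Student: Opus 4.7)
The plan is to reduce the lemma to verifying two genericity conditions on the parameters $(d_i, d'_i)$. Since both $A_{2m}$ and $B_{2m}$ already lie in $\mathcal{D}$, specializing Lemma~\ref{lemma_matrix_non_negative} to $t=1$ shows that $A_{2m}+B_{2m}\in\mathcal{U}$ with every principal $2\times 2$ minor equal to a non-negative constant. So to upgrade membership from $\mathcal{U}$ to $\mathcal{U}_0$, only two things remain: (i) every principal $2\times 2$ minor is strictly positive, and (ii) the rank of $A_{2m}+B_{2m}$ is exactly $n=2m-1$. I will show that each failure cuts out a proper algebraic subset of the parameter space, so both conditions hold simultaneously for some (in fact generic) choice of $(d_i, d'_i)$.

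For (i), Lemma~\ref{lemma_matrix_2_by_2} expresses the $(i,j)$-principal minor of $A_{2m}+B_{2m}$ as a perfect square that vanishes iff the rank-one $(i,j)$-blocks of $A_{2m}$ and $B_{2m}$ are proportional. Expanding through $A_{2m}=D_{2m}^T C_{2m}D_{2m}$ and $B_{2m}={D'_{2m}}^T C'_{2m}D'_{2m}$, each such proportionality becomes a single bilinear equation in $(d_i, d'_i)$ that is manifestly not identically zero: for instance at $(i,j)=(1,3)$ it reduces to $d_1d'_3=d_3d'_1$, and analogous non-trivial bilinears appear at every other pair because the block sign patterns of $C_{2m}$ and $C'_{2m}$ differ. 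Thus the union over the $\binom{2m}{2}$ pairs is a proper Zariski-closed subset of parameter space.

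For (ii), the inclusion $\mathbf{1}\in\ker(A_{2m}+B_{2m})$ already gives $\mathrm{rank}(A_{2m}+B_{2m})\le 2m-1$, so it suffices to pin down $\ker(A_{2m}+B_{2m})$ as exactly $\mathbb{R}\mathbf{1}$. Using (\ref{equation_matrix_A_column_even}) and (\ref{equation_matrix_B_column_even}),
\[
(A_{2m}+B_{2m})v=\sum_{k=1}^{m}(v_{2k-1}-v_{2k})\alpha_{2k-1}+\sum_{k=1}^{m-1}(v_{2k}-v_{2k+1})\beta_{2k}+(v_{2m}-v_1)\beta_{2m},
\]
so a kernel element corresponds to a linear dependence among the $2m$ vectors $\alpha_1,\alpha_3,\ldots,\alpha_{2m-1},\beta_2,\beta_4,\ldots,\beta_{2m}$ whose coefficients are the above cyclically differenced expressions in the $v_i$. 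By the discussion just before (\ref{equation_column_vectors}) these $2m$ vectors span a $(2m-1)$-dimensional subspace, admitting a unique-up-to-scalar linear relation $\sum_k c_k\alpha_{2k-1}+\sum_k e_k\beta_{2k}=0$; meanwhile the ``differences assembly'' map $v\mapsto(v_1-v_2,\ldots,v_{2m-1}-v_{2m},v_2-v_3,\ldots,v_{2m-2}-v_{2m-1},v_{2m}-v_1)$ has kernel $\mathbb{R}\mathbf{1}$ and image exactly the telescoping hyperplane $\{\sum_k c_k+\sum_k e_k=0\}$. Hence $\ker(A_{2m}+B_{2m})=\mathbb{R}\mathbf{1}$ iff the unique relation satisfies $\sum_k c_k+\sum_k e_k\ne 0$, which is one further polynomial inequality in $(d_i, d'_i)$.

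The hard part will be step (ii), since verifying $\sum_k c_k+\sum_k e_k\ne 0$ requires extracting the explicit linear dependence among the chosen columns, which in turn depends on the precise block structures of $C_{2m}$ and $C'_{2m}$ combined with the null-space constraints (\ref{equation_null_space_C1}) and (\ref{equation_null_space_C2}). The cleanest way to finish is to specialize: set $d_{2k-1}=1$ (hence $d_{2k}=-1$) for every $k$, and normalize the $d'_{2k}$ similarly through (\ref{equation_null_space_C2}); then compute the unique relation among the $\alpha_{2k-1}$ and $\beta_{2k}$ directly and read off $\sum_k c_k+\sum_k e_k$. Once this specialization gives a non-zero value, the polynomial defining (ii) is not identically zero; combining with the Zariski-open condition from (i) yields a non-empty open set of admissible $(d_i, d'_i)$, which proves the lemma.
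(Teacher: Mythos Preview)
Your proposal is correct and follows essentially the same approach as the paper: reduce the rank condition to the affine independence of $\alpha_1,\alpha_3,\ldots,\alpha_{2m-1},\beta_2,\ldots,\beta_{2m}$ (your coefficient-sum condition on the unique linear relation is exactly affine independence), then specialize to $d_i=d'_i=(-1)^i$ to verify non-emptiness of the Zariski-open set. The one step you leave as a plan---actually computing the unique relation at the specialization---is what the paper carries out, observing by symmetry that $\sum_k\alpha_{2k-1}+\sum_k\beta_{2k}=0$, so the coefficient sum is $2m\ne 0$.
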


\begin{proof}
The $2m$ columns of $A_{2m}+B_{2m}$ are $\alpha_i+\beta_i$.
To prove that $A_{2m}+B_{2m}$ has rank $2m-1$ (for proper choice of non-zero $d_i$ and $d'_i$),
we show that the $2m-1$ vectors $\alpha_i+\beta_i$ $(2\le i\le 2m)$ are linearly independent,
which by (\ref{equation_matrix_A_column_even}) is $\beta_i-\alpha_{i-1}$ if $i$ is even,
and by (\ref{equation_matrix_B_column_even}) is $\alpha_i-\beta_{i-1}$ if $i$ is odd and $i\ge 3$.
This is also equivalent to prove that 
$\alpha_1$, $\beta_2$, \dots, $\alpha_{2m-1}$, $\beta_{2m}$ are \emph{affinely independent}.

By (\ref{equation_column_vectors}) the $2m$ vectors 
$\alpha_1$, \dots, $\alpha_{2m-1}$, $\beta_2$, \dots, $\beta_{2m}$
span a $(2m-1)$-dimensional subspace,
so up to a constant factor, there is a \emph{unique} linear dependence among the $2m$ vectors.
For the \emph{particular} case of $d_i=d'_i=(-1)^i$, by symmetry it is not hard to check that
\[\alpha_1+\cdots+\alpha_{2m-1}+\beta_2+\cdots+\beta_{2m}=0.
\]
As the sum of the coefficients is $2m$, not \emph{zero}, so the $2m$ vectors are affinely independent.
By the arguments above, this means that $\alpha_i+\beta_i$ $(2\le i\le 2m)$ are linearly independent.
So for almost all non-zero $d_i$ and $d'_i$, $A_{2m}+B_{2m}$ has rank $2m-1$.

The $2\times 2$ diagonal submatrices of $A_{2m}$ and $B_{2m}$ 
are $\begin{pmatrix} d_i^2 & \pm d_id_j  \\ \pm d_id_j & d_j^2 \end{pmatrix}$
and $\begin{pmatrix} {d'_i}^2 & \pm d'_id'_j  \\ \pm d'_id'_j & {d'_j}^2 \end{pmatrix}$.
Though $d_i$ and $d'_i$ need to satisfy (\ref{equation_null_space_C1}) 
and (\ref{equation_null_space_C2}) respectively,
there are enough freedom for us to further adjust $d_i$ and $d'_i$ such that for any $i\ne j$,
we have $|d_id'_j|\ne |d_jd'_i|$.
So by Lemma~\ref{lemma_matrix_2_by_2}, 
all the principal minors of order 2 of $A_{2m}+B_{2m}$ are positive.
As $A_{2m}+B_{2m}$ has rank $2m-1$ and all the diagonal entries are positive,
so $A_{2m}+B_{2m}\in\mathcal{U}_0$.
\end{proof}

\subsection{For $n$ even}

For an even $n\ge 6$, let $n=2m$. Let $A_{2m}=D_{2m}^TC_{2m}D_{2m}$ be as before
with non-zero $d_i$ in (\ref{equation_null_space_C1}),
and $A_{2m+1}$ be a $(2m+1)\times (2m+1)$ symmetric matrix whose upper-left 
$2m\times 2m$ submatrix is $A_{2m}$ and the other entries are 0. 
Namely, $A_{2m+1}=\begin{pmatrix} A_{2m} & 0  \\ 0 & 0 \end{pmatrix}$.
It is easy to check that $A_{2m+1}\in\mathcal{D}$.
By (\ref{equation_null_space_A}), we have $\rank(A_{2m+1})=m$.
So the null space of $A_{2m+1}$ has dimension $m+1$ and the form
\begin{equation}
\label{equation_null_space_A_2}
(a_1,a_1,\dots,a_{2m-1},a_{2m-1},a_{2m+1})^T.
\end{equation}

Let $B_{2m+1}$ be a $(2m+1)\times (2m+1)$ symmetric matrix whose lower-right 
$2m\times 2m$ submatrix is $A_{2m}$ (\emph{not} $B_{2m}$ as before) and the other entries are 0.
Namely, $B_{2m+1}=\begin{pmatrix} 0 & 0 \\ 0 & A_{2m}  \end{pmatrix}$.
Similarly $B_{2m+1}\in\mathcal{D}$.
So $\rank(B_{2m+1})=m$, and the null space of $B_{2m+1}$ has dimension $m+1$ and the form
\begin{equation}
\label{equation_null_space_B_2}
(b_1,b_2,b_2,\dots,b_{2m},b_{2m})^T.
\end{equation}

For any vector in the common null space of $A_{2m+1}$ and $B_{2m+1}$,
by (\ref{equation_null_space_A_2}) its $(2k-1)$-th and $2k$-th terms are equal,
and by (\ref{equation_null_space_B_2}) its $2k$-th and $(2k+1)$-th terms are equal,
so all the terms are equal.
So the common null space of $A_{2m+1}$ and $B_{2m+1}$ is 1-dimensional that contains $\mathbf{1}$.
The non-zero columns $\alpha_i$ $(1\le i\le 2m)$ of $A_{2m+1}$ satisfy 
\begin{equation}
\label{equation_matrix_A_column_odd}
\alpha_1=-\alpha_2, \dots, \alpha_{2m-1}=-\alpha_{2m},
\end{equation}
and the non-zero columns $\beta_i$ $(2\le i\le 2m+1)$ of $B_{2m+1}$ satisfy 
\begin{equation}
\label{equation_matrix_B_column_odd}
\beta_2=-\beta_3, \dots, \beta_{2m}=-\beta_{2m+1}.
\end{equation}
As the common null space of $A_{2m+1}$ and $B_{2m+1}$ is 1-dimensional,
so the \emph{row} vectors of $A_{2m+1}$ and $B_{2m+1}$ span a $2m$-dimensional subspace.
As $A_{2m+1}$ and $B_{2m+1}$ are symmetric matrices,
so by (\ref{equation_matrix_A_column_odd}) and (\ref{equation_matrix_B_column_odd}),
those $2m$ non-zero \emph{column} vectors 
\begin{equation}
\label{equation_colume_vectors_even}
\alpha_1, \dots, \alpha_{2m-1}, \beta_2, \dots, \beta_{2m}
\end{equation}
also span a $2m$-dimensional subspace, thus are linearly independent.

We next show that the null space of $A_{2m+1}+B_{2m+1}$ is also 1-dimensional.

\begin{lemma}
\label{lemma_A_plus_B_U0_odd}
For $n=2m$ with $m\ge 3$, with proper choice of non-zero $d_i$ $(1\le i\le 2m)$,
then $A_{2m+1}+B_{2m+1}\in\mathcal{U}_0$.
\end{lemma}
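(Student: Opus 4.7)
My plan is to mirror Lemma~\ref{lemma_A_plus_B_U0_even}, establishing two things: (i) $\rank(A_{2m+1}+B_{2m+1})=2m$, and (ii) every principal minor of order $2$ of $A_{2m+1}+B_{2m+1}$ is strictly positive. Combined with the properties inherited from $A_{2m+1},B_{2m+1}\in\mathcal{D}$ ($\mathbf{1}$ in the null space and non-negative diagonal), these two facts give $A_{2m+1}+B_{2m+1}\in\mathcal{U}_0$.

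For (i), I would argue directly with columns. The $i$-th column of $A_{2m+1}+B_{2m+1}$ is $\alpha_i+\beta_i$, where $\beta_1=0$ and $\alpha_{2m+1}=0$. Combining (\ref{equation_matrix_A_column_odd}) and (\ref{equation_matrix_B_column_odd}), columns $2,3,\dots,2m+1$ expand, in the basis $\{\alpha_1,\beta_2,\alpha_3,\beta_4,\dots,\alpha_{2m-1},\beta_{2m}\}$ from (\ref{equation_colume_vectors_even}), as the columns of a lower bidiagonal $2m\times 2m$ matrix with $-1$ on the diagonal and $+1$ on the subdiagonal. Its determinant is $(-1)^{2m}=1$, so those $2m$ columns are linearly independent and the rank is exactly $2m$. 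Observe that, in contrast to Lemma~\ref{lemma_A_plus_B_U0_even}, no clever symmetric choice of $d_i$ is required at this stage: the basis (\ref{equation_colume_vectors_even}) already spans a full $2m$-dimensional subspace.

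For (ii), I would apply Lemma~\ref{lemma_matrix_2_by_2} at $t=1$ to each index pair $(i,j)$. Because $A_{2m+1},B_{2m+1}\in\mathcal{D}$, every $2\times 2$ principal submatrix of either has the required rank-$\le 1$ form, so the principal minor of the sum at $(i,j)$ is the perfect square $(a_1b_2-a_2b_1)^2\ge 0$; positivity reduces to ruling out proportionality of the two rank-$1$ contributions at $(i,j)$.

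The main obstacle I anticipate is that the null-space relation $d_{2k-1}=-d_{2k}$ in (\ref{equation_null_space_C1}) makes the $(2k,2\ell)$ rank-$1$ blocks of $A_{2m+1}$ and $B_{2m+1}$ coincide for $k\neq\ell$ whenever both are built from the same $A_{2m}$, producing a vanishing minor at these positions. I would fix this in the same way the odd case introduces the auxiliary $d'$: replace the copy of $A_{2m}$ placed in the lower-right block of $B_{2m+1}$ by $\tilde A_{2m}=\tilde D^T C_{2m}\tilde D$ for an independent parameter vector $\tilde d$ also satisfying $\tilde d_{2k-1}=-\tilde d_{2k}$. Since the null-space pattern (\ref{equation_null_space_B_2}) and the column-rank argument in step (i) depend only on this pattern and not on the particular entries, both carry over verbatim. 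With the added freedom, the proportionality condition at each $(i,j)$ becomes a concrete polynomial equation in $d$ and $\tilde d$ (e.g.\ $\tilde d_{2k}/d_{2k}=\tilde d_{2\ell}/d_{2\ell}$ in the troublesome case above), which a generic non-zero choice of $d_i$ and $\tilde d_i$ avoids simultaneously. This yields strict positivity of every order-$2$ principal minor and completes the proof.
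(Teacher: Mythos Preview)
Your overall strategy matches the paper's: establish rank $2m$ via the column vectors and strict positivity of the $2\times 2$ principal minors via Lemma~\ref{lemma_matrix_2_by_2}. For part~(i) your lower-bidiagonal change of basis is equivalent to the paper's telescoping partial-sum argument $\sum_{i=1}^k(\alpha_i+\beta_i)\in\{\alpha_k,\beta_k\}$; this is just different bookkeeping.

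The interesting point is part~(ii), where you have in fact spotted a genuine gap that the paper glosses over. With the construction exactly as written---the \emph{same} $A_{2m}$ placed in both the upper-left block of $A_{2m+1}$ and the lower-right block of $B_{2m+1}$---the constraint $d_{2k}=-d_{2k-1}$ forces the $2\times 2$ principal blocks of $A_{2m+1}$ and $B_{2m+1}$ at any even pair $(2k,2\ell)$ with $k\ne\ell$ to be \emph{equal} (both equal $\begin{pmatrix} d_{2k}^2 & -d_{2k}d_{2\ell}\\ -d_{2k}d_{2\ell} & d_{2\ell}^2\end{pmatrix}$), so the minor of the sum is
\[
(2d_{2k}^2)(2d_{2\ell}^2)-(2d_{2k}d_{2\ell})^2=0
\]
identically in the $d_i$. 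The paper's assertion that ``with proper choice of non-zero $d_i$ \dots\ all the principal minors of order $2$ are positive'' therefore does not hold for the construction as stated. Your proposed repair---replacing the lower-right block by $\tilde A_{2m}=\tilde D^T C_{2m}\tilde D$ with an independent $\tilde d$ obeying the same pattern $\tilde d_{2k}=-\tilde d_{2k-1}$---is exactly what is needed: it preserves the null-space shape (\ref{equation_null_space_B_2}) and the column relations (\ref{equation_matrix_B_column_odd}), so the rank argument carries over verbatim, while the troublesome minor becomes $(d_{2k-1}\tilde d_{2\ell-1}-d_{2\ell-1}\tilde d_{2k-1})^2$, nonzero for generic $d,\tilde d$. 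Your proposal is thus not merely an alternative but a necessary correction to the paper's argument.
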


\begin{proof}
The $2m+1$ columns of $A_{2m+1}+B_{2m+1}$ are $\alpha_i+\beta_i$, with $\beta_1=\alpha_{2m+1}=0$.
Notice that $\sum_{i=1}^k (\alpha_i+\beta_i)$,
by (\ref{equation_matrix_A_column_odd}) and (\ref{equation_matrix_B_column_odd}) and induction on $k$, 
is $\alpha_k$ if $k$ is odd, and is $\beta_k$ if $k$ is even.
By(\ref{equation_colume_vectors_even})
$\alpha_1$, \dots, $\alpha_{2m-1}$, $\beta_2$, \dots, $\beta_{2m}$ are linearly independent, 
so $\sum_{i=1}^k (\alpha_i+\beta_i)$ $(1\le k\le 2m)$ are linearly independent,
then $\alpha_i+\beta_i$ $(1\le i\le 2m)$ are linearly independent,
and thus $A_{2m+1}+B_{2m+1}$ has rank $2m$.

With proper choice of non-zero $d_i$ $(1\le i\le 2m)$ in (\ref{equation_null_space_C1}),
by Lemma~\ref{lemma_matrix_2_by_2}
(and similar to the proof of Lemma~\ref{lemma_A_plus_B_U0_even}),
all the principal minors of order 2 of  $A_{2m+1}+B_{2m+1}$ are positive.
As  $A_{2m+1}+B_{2m+1}$ has rank $2m$ and all the diagonal entries are positive,
so $A_{2m+1}+B_{2m+1}\in\mathcal{U}_0$.
\end{proof}

The following observation will be useful for the proof of Theorem~\ref{theorem_flexible_volume_non_constant} later.

\begin{remark}
\label{remark_constant_determinant}
For this particular construction of $A_{2m+1}$ and $B_{2m+1}$,
we have $\rank(A_{2m+1})=\rank(B_{2m+1})=m$.
For the matrix $tA_{2m+1}+\frac{1}{t}B_{2m+1}$,
its determinant (and similarly for any of its minors) can be expressed as a sum of signed products 
such that each summand is the product of a minor of $tA_{2m+1}$ (of any order, including 0 and $2m+1$)
and the ``complement'' minor of $\frac{1}{t}B_{2m+1}$.
As any minor of $A_{2m+1}$ and $B_{2m+1}$ of order higher than $m$ is 0,
so the determinant of any $2m\times 2m$ submatrix of $tA_{2m+1}+\frac{1}{t}B_{2m+1}$ is a constant
(because only when $k=m$ the $t^k\times 1/t^{2m-k}$ term is non-vanishing).
This property is analogous to Lemma~\ref{lemma_matrix_2_by_2}.
On the other hand, it is not so for $n=2m-1$. Namely for the matrix $tA_{2m}+\frac{1}{t}B_{2m}$
(not to be confused with the matrix $tA_{2m+1}+\frac{1}{t}B_{2m+1}$),
the determinant of any $(2m-1)\times (2m-1)$ submatrix is not a non-zero constant, in fact, 
it is a linear combination of $t$ and $\frac{1}{t}$ without a constant term.
\end{remark}

\subsection{Summary of the proof of Theorem~\ref{theorem_symmetric_matrix}}

\begingroup
\def\thetheorem{\ref{theorem_symmetric_matrix}}
\begin{theorem}
For $n\ge 5$, there exists a continuous family of non-identical 
$(n+1)\times (n+1)$ symmetric matrices $U\in\mathcal{U}_0$,
such that all the principal minors of order 2 remain constant.
\end{theorem}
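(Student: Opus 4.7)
The plan is to combine the explicit pair of matrices from the preceding two subsections with the one-parameter rescaling of Lemma~\ref{lemma_matrix_2_by_2}. For $n=2m-1\ge 5$ odd I would take $(A,B):=(A_{2m},B_{2m})$, and for $n=2m\ge 6$ even I would take $(A,B):=(A_{2m+1},B_{2m+1})$. In both cases $A,B\in\mathcal{D}$ by construction, and a suitable choice of the free scalars $d_i$ (and $d'_i$ in the odd case) makes $A+B\in\mathcal{U}_0$ by Lemma~\ref{lemma_A_plus_B_U0_even} or Lemma~\ref{lemma_A_plus_B_U0_odd} respectively.

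With such an $(A,B)$ fixed, I would form the one-parameter family
\[U(t)\;:=\;tA+\tfrac{1}{t}B,\qquad t>0,
\]
and verify the three clauses of the theorem for $t$ in an open interval containing $1$. Constancy of every principal minor of order $2$ comes for free from Lemma~\ref{lemma_matrix_non_negative}, and the positivity of these minors at $t=1$ (inherited from $U(1)=A+B\in\mathcal{U}_0$) then upgrades to positivity for every $t>0$. Likewise $\mathbf{1}$ lies in the null space of both $A$ and $B$, so $\mathbf{1}$ lies in the null space of $U(t)$ and $\rank U(t)\le n$ automatically.

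The single step where something genuine must be said is maintaining $\rank U(t)=n$, and I expect this to be the main, though mild, obstacle. I would handle it by an openness argument: at $t=1$ the chosen lemma furnishes an $n\times n$ submatrix of $U(1)$ with non-zero determinant; that determinant is a Laurent polynomial in $t$, and hence remains non-zero on some open interval $I\ni 1$. On $I$ we therefore have $U(t)\in\mathcal{U}_0$. Non-triviality of the family is then immediate from the diagonal of $U(t)$: in the odd case the $i$-th diagonal entry is $t\,d_i^2+\tfrac{1}{t}(d'_i)^2$, while in the even case the first and last diagonal entries collapse to $t\,d_1^2$ and $\tfrac{1}{t}d_{2m}^2$ respectively, so the diagonal of $U(t)$ depends non-trivially on $t$. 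In short, the real content of the theorem has already been absorbed into the rank computations of Lemmas~\ref{lemma_A_plus_B_U0_even} and \ref{lemma_A_plus_B_U0_odd}, and the present statement is a packaging result that promotes the single matrix $A+B$ to a one-parameter family via the rescaling trick of Lemma~\ref{lemma_matrix_2_by_2}.
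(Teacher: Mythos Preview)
Your proposal is correct and follows essentially the same approach as the paper: the paper's proof is a two-line reference to Remark~\ref{remark_matrix_rank} together with Lemmas~\ref{lemma_A_plus_B_U0_even} and \ref{lemma_A_plus_B_U0_odd}, relying implicitly on the openness of $\mathcal{U}_0$ to pass from the single matrix $A+B$ to the family $tA+\tfrac{1}{t}B$. You have simply made that openness step explicit (via the Laurent-polynomial argument for the rank) and added a concrete check of non-triviality from the diagonal, both of which the paper leaves to the reader.
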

\addtocounter{theorem}{-1}
\endgroup

\begin{proof}
By Remark~\ref{remark_matrix_rank},
for $n$ odd it is proved by Lemma~\ref{lemma_A_plus_B_U0_even},
and for $n$ even it is proved by Lemma~\ref{lemma_A_plus_B_U0_odd}.
\end{proof}

\section{Relations to Question~\ref{question_simplex_matrix_rigid}
and Question~\ref{question_simplex_triangle_rigid}}

Recall that $\mathcal{F}_0$ is the set of $(n+1)\times (n+1)$ positive semi-definite matrices 
with rank $n$ and with the vector $\mathbf{1}$ in the null space 
(Definition~\ref{definition_matrix_positive_definite}).
With Theorem~\ref{theorem_symmetric_matrix} proved, 
it remains to see if the \emph{positive semi-definiteness} of the matrices in $\mathcal{F}_0$ 
(as compared to the symmetric matrices in $\mathcal{U}_0$)
is a barrier to solve Question~\ref{question_simplex_matrix_rigid},
or our method to prove Theorem~\ref{theorem_symmetric_matrix} 
can also be adapted to provide counterexamples to Question~\ref{question_simplex_matrix_rigid},
namely, if we can find $A,B\in\mathcal{D}$ such that $A+B\in\mathcal{F}_0$ 
(see Remark~\ref{remark_matrix_rank}).

\begin{remark}
If such $A,B\in\mathcal{D}$ were to exist such that $A+B\in\mathcal{F}_0$,
because $\mathcal{F}_0$ in an \emph{open} subset of $\mathcal{U}$ (Definition~\ref{definition_matrix_symmetric}),
so for any small changes of $A$ and $B$ in $\mathcal{D}$ their sum $A+B$ is still in $\mathcal{F}_0$.
This also makes numerical search possible, at least in theory:
while $\mathcal{D}$ is a high (up to $n$) dimensional space, it is fairly easy to find all the elements of $\mathcal{D}$,
so we can just sample $A, B\in\mathcal{D}$ numerically, and then check if $A+B\in\mathcal{F}_0$. 
If the numerical search turns out successful for some $n$, then it provides counterexamples to 
Question~\ref{question_simplex_matrix_rigid}, as well as to 
Question~\ref{question_simplex_triangle_rigid} and Question~\ref{question_simplex_rigid}
as they are all equivalent.
\end{remark}

In general, no matter the numerical search is successful or not,
in the following we provide counterexamples to a variant of Question~\ref{question_simplex_triangle_rigid}
where $\mathbb{R}^n$ is replaced by a pseudo-Euclidean space $\mathbb{R}^{p,n-p}$ 
for some unspecified $p\ge 2$.

\begin{theorem}
\label{theorem_simplex_triangle_non_rigid}
For any $n\ge 5$, in $\mathbb{R}^{p,n-p}$ for some unspecified $p\ge 2$,
there exists a continuous family of non-congruent $n$-simplices $P$
with the centroid fixed at the origin $O$, such that all triangles $OP_iP_j$ have fixed areas
and are in Euclidean planes.
\end{theorem}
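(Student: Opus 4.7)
The plan is to convert Theorem~\ref{theorem_symmetric_matrix} into its geometric counterpart by running the correspondence of Section~4 in reverse, now interpreting the matrix family in a pseudo-Euclidean space rather than over $\mathbb{R}^n$. Concretely, I would take the continuous family $U(t)=tA+\frac{1}{t}B\in\mathcal{U}_0$ supplied by Theorem~\ref{theorem_symmetric_matrix}, where each $U(t)$ is an $(n+1)\times(n+1)$ symmetric matrix of rank $n$ with $\mathbf{1}$ in its null space, every order-$2$ principal minor of $U(t)$ is a fixed positive constant, and $U(t)$ is genuinely non-identical in $t$.

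Next, for each $t$ in a suitable interval I would realise $U(t)$ as a Gram matrix $(v_i\cdot v_j)_{1\le i,j\le n+1}$ in a pseudo-Euclidean space: since $U(t)$ has rank $n$ with $\mathbf{1}$ in its null space, it descends to a non-degenerate symmetric bilinear form on the $n$-dimensional quotient $\mathbb{R}^{n+1}/\langle\mathbf{1}\rangle$, so by a mild variant of Lemma~\ref{lemma_symmetric_matrix_decomposition} (in the spirit of Remark~\ref{remark_symmetric_matrix_decomposition}) it can be represented as a Gram matrix of vectors $v_1,\dots,v_{n+1}\in\mathbb{R}^{p,n-p}$ for some $p$. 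The null-space condition $U(t)\mathbf{1}=0$ then forces $\sum_i v_i=0$, so setting $v_i=\overrightarrow{OP_i}$ produces an $n$-simplex $P(t)$ with centroid $O$. The geometric properties are immediate from this: each order-$2$ principal minor of $U(t)$ is $4$ times the squared area of the triangle $OP_iP_j$, so these areas are fixed positive constants; positivity of each $2\times 2$ principal minor makes the corresponding $2\times 2$ Gram block positive definite, so $\mathrm{span}(v_i,v_j)$ is a Euclidean $2$-plane and in particular $p\ge 2$; and non-congruence of $P(t)$ across the family follows because the Gram matrix is a congruence invariant while $U(t)$ is non-identical in $t$.

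The main obstacle I expect is a consistency issue: $p$ must stay constant over the interval of $t$, so that all the $P(t)$ live in the \emph{same} pseudo-Euclidean space. This should follow from continuity of signature: since $U(t)$ has constant rank $n$ throughout the family, the induced form on the quotient $\mathbb{R}^{n+1}/\langle\mathbf{1}\rangle$ stays non-degenerate, and a continuously varying non-degenerate symmetric bilinear form has locally constant signature, so after restricting to a small enough open sub-interval of $t$ one obtains the required continuous family of $n$-simplices in a fixed $\mathbb{R}^{p,n-p}$. Since the statement only asks for \emph{some unspecified} $p\ge 2$, I would never need to compute $p$ explicitly; only the bound $p\ge 2$ is needed, and it already falls out of the Euclidean-plane property above.
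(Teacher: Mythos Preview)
Your proposal is correct and follows essentially the same approach as the paper: take the family $U(t)=tA+\frac{1}{t}B\in\mathcal{U}_0$ from Theorem~\ref{theorem_symmetric_matrix}, realize each $U(t)$ as a Gram matrix of vectors $v_1,\dots,v_{n+1}$ in some $\mathbb{R}^{p,n-p}$ with $\sum v_i=0$, and read off the fixed areas and the Euclidean-plane property from the positivity of the diagonal entries and order-$2$ principal minors. The paper differs only in minor technicalities: it realizes $U(t)$ via its nonsingular upper-left $n\times n$ submatrix (applying Lemma~\ref{lemma_symmetric_matrix_decomposition} directly and then setting $v_{n+1}=-\sum_{i\le n}v_i$) rather than via the quotient $\mathbb{R}^{n+1}/\langle\mathbf{1}\rangle$, and it is less explicit than you are about why $p$ stays constant on a small interval of $t$.
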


\begin{proof}
By Lemma~\ref{lemma_A_plus_B_U0_even} for $n$ odd 
and Lemma~\ref{lemma_A_plus_B_U0_odd} for $n$ even,
we can find $(n+1)\times (n+1)$ symmetric matrices $A,B\in\mathcal{D}$ 
such that $A+B\in\mathcal{U}_0$.
Denote $A+B$ by $U$, the upper-left $n\times n$ submatrix by $U_0$,
the upper $n\times (n+1)$ submatrix by $U_1$,
and the left $(n+1)\times n$ submatrix by $U_2$.
As $U\cdot\mathbf{1}=0$ and $U_2=U_1^T$, so
$\rank(U_0)=\rank(U_1)=\rank(U_2)=\rank(U)$.
As $U\in\mathcal{U}_0$, so $\rank(U)=n$ and $\rank(U_0)=n$, 
and therefore $\det(U_0)$ is non-zero.
As $U_0$ is symmetric, by Lemma~\ref{lemma_symmetric_matrix_decomposition},
$U_0$ may be treated as the Gram matrix $(v_i\cdot v_j)_{1\le i,j\le n}$
of some linearly independent vectors $v_1, \dots, v_n$ of a pseudo-Euclidean space 
$\mathbb{R}^{p,n-p}$ for some $p$.
Let $v_{n+1}=-\sum_{i=1}^n v_i$, then $v_1, \dots, v_{n+1}$ are affinely independent,
and by applying $U\cdot\mathbf{1}=0$ again,
we have $U=(v_i\cdot v_j)_{1\le i,j\le n+1}$.

In $\mathbb{R}^{p,n-p}$, let $P$ be an $n$-simplex whose vertices $P_i$ $(1\le i\le n+1)$ 
satisfy $v_i=\overrightarrow{OP_i}$.
As $\sum_{i=1}^{n+1} v_i=0$, then $O$ is the centroid of $P$.
As $U\in\mathcal{U}_0$, so all the diagonal entries and principal minors of order 2 are all positive.
Therefore any triangle $OP_iP_j$ spans a two-dimensional Euclidean plane, and thus $p\ge 2$.

For any $t$ in a small neighborhood of 1, we still have $tA+\frac{1}{t}B\in\mathcal{U}_0$.
So we can construct a continuous family of non-congruent simplices $P$ 
in $\mathbb{R}^{p,n-p}$ from $tA+\frac{1}{t}B$.
By Lemma~\ref{lemma_matrix_non_negative}, all triangles $OP_iP_j$ have fixed areas over $t$.
This finishes the proof.
\end{proof}

\begin{remark}
In Theorem~\ref{theorem_simplex_triangle_non_rigid}, for a fixed $n$, the $p$ may not be uniquely determined.
Even when $A$ and $B$ are fixed, $tA+\frac{1}{t}B$ and $A+B$ 
may potentially correspond to different $p$'s when $t$ moves away from 1,
where $p$ is the number of positive eigenvalues of $tA+\frac{1}{t}B$
(Lemma~\ref{lemma_symmetric_matrix_decomposition}).
\end{remark}

We have the following interesting observation,
which will be further addressed in the next section.

\begin{remark}
\label{remark_constant_volume_P}
For $n=2m$, in Lemma~\ref{lemma_A_plus_B_U0_odd},
for the \emph{particular} construction of $A_{2m+1}$ and $B_{2m+1}$,
we have $\rank(A_{2m+1})=\rank(B_{2m+1})=m$.
By constructing a continuous family of non-congruent $2m$-simplices $P$ 
from $tA_{2m+1}+\frac{1}{t}B_{2m+1}$ using Theorem~\ref{theorem_simplex_triangle_non_rigid},
and using the formula (\ref{equation_volume_pseudo_Euclidean}) to compute the volumes
of the $2m$-simplices formed by $O$ and the $2m+1$ facets of $P$,
then by Remark~\ref{remark_constant_determinant}, 
we show that the volumes of those $2m$-simplices remain constant over $t$.
As those simplices add up to $P$, so $V_{2m}(P)$ also remains constant over $t$ 
(but not so for $n=2m-1$).
\end{remark}

We use Theorem~\ref{theorem_simplex_triangle_non_rigid} 
to prove Theorem~\ref{theorem_flexible_pseudo_Euclidean} next.

\section{Proofs of Theorem~\ref{theorem_flexible_pseudo_Euclidean}
and \ref{theorem_flexible_volume_non_constant}}
\label{section_dual_pseudo_Euclidean}

In $\mathbb{R}^{p,n-p}$, let $P$ be an $n$-simplex with the centroid at the origin $O$
(but for now the triangles $OP_iP_j$ need not all be in Euclidean planes 
as required in Theorem~\ref{theorem_simplex_triangle_non_rigid}).
For some $c>0$, let $Q$ be a \emph{dual} of $P$ 
\[Q:=\{y \in \mathbb{R}^{p,n-p}: x\cdot y \le c \quad\text{for all $x\in P$}\},
\]
see also (\ref{equation_dual_Euclidean}).
When $c=1$, $Q$ is the \emph{polar dual} $P^{\ast}$ of $P$.
Let $v_i=\overrightarrow{OP_i}$. Just like in the Euclidean case, 
it can be verified that $Q$ is an $n$-simplex in a finite region with the centroid at the origin $O$,
and $v_i$ is a normal vector at the $(n-1)$-face $F_i$ of $Q$. 
But unlike in $\mathbb{R}^n$, here $v_i$ is an \emph{outward} normal vector to $Q$ at $F_i$ if $v_i^2>0$,
an \emph{inward} normal vector to $Q$ at $F_i$ if $v_i^2<0$, or \emph{parallel} to $F_i$ if $v_i^2=0$.
However, this does not affect our results.

Now for $P$ also let all triangles $OP_iP_j$ be in Euclidean planes 
(as required in Theorem~\ref{theorem_simplex_triangle_non_rigid}).
This insures that the area of $OP_iP_j$ is non-zero.
In this case of $P$, we have $v_i^2>0$ for all $i$, 
so $v_i$'s are still all outward normal vectors to $Q$ at $F_i$'s.
In $\mathbb{R}^n$, 
the volume of any face of $Q$ (of any dimension) can be computed from the information of $P$
using a simple formula (e.g. see \cite[Theorem 19]{Lee:stress});
the formula only uses the fact that $O$ is the centroid of $P$
and can be applied to $\mathbb{R}^{p,n-p}$ as well.
As a direct result, it shows that 
for the $(n-2)$-face $F_{ij}$ of $Q$ on the intersection of $F_i$ and $F_j$,
the volume $V_{n-2}(F_{ij})$ is proportional to the area of the triangle $OP_iP_j$
\begin{equation}
\label{simplex_dual_volume_n_minus_2}
V_{n-2}(F_{ij})=\frac{c_2V_2(OP_iP_j)}{V_n(P)}\cdot c^{n-2},
\end{equation}
where $c_2$ is a constant that only depends on $n$.
Similarly we also have
\begin{equation}
\label{simplex_dual_volume_n}
V_n(Q)=\frac{c_0}{V_n(P)}\cdot c^n,
\end{equation}
where $c_0$ is also a constant that only depends on $n$.


Now we are ready to prove Theorem~\ref{theorem_flexible_pseudo_Euclidean},
which we recall below.

\begingroup
\def\thetheorem{\ref{theorem_flexible_pseudo_Euclidean}}
\begin{theorem}
For any $n\ge 5$, in $\mathbb{R}^{p,n-p}$ for some unspecified $p\ge 2$,
there exists a continuous family of non-congruent $n$-simplices $Q$
with fixed $(n-2)$-volumes of all the $(n-2)$-faces,
and all the dihedral angles are Euclidean angles.
\end{theorem}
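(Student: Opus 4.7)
The plan is to deduce Theorem~\ref{theorem_flexible_pseudo_Euclidean} from Theorem~\ref{theorem_simplex_triangle_non_rigid} via the duality set up at the beginning of Section~\ref{section_dual_pseudo_Euclidean}. First, I would apply Theorem~\ref{theorem_simplex_triangle_non_rigid} to obtain, for the given $n\ge 5$, a continuous family $\{P_t\}$ of pairwise non-congruent $n$-simplices in some $\mathbb{R}^{p,n-p}$ with $p\ge 2$, all having the origin $O$ as centroid and such that every triangle $OP_iP_j$ spans a Euclidean 2-plane and has $t$-independent area. Because the Gram matrix $(v_i\cdot v_j)$ lies in $\mathcal{U}_0$, its upper-left $n\times n$ block has non-zero determinant, so $V_n(P_t)$ is a continuous, non-vanishing function of $t$ on a small interval around the base parameter.

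Next, I would define $Q_t$ as the dual of $P_t$ with a scaling parameter $c(t)>0$ chosen so that the quotient $c(t)^{n-2}/V_n(P_t)$ is independent of $t$; this is possible because $V_n(P_t)$ is positive and continuous. Since $v_i^2>0$ for every $i$ (as the triangles $OP_iP_j$ are Euclidean), each $v_i$ is an outward normal to the corresponding facet of $Q_t$, so $Q_t$ is a bona fide $n$-simplex in $\mathbb{R}^{p,n-p}$. Applying formula (\ref{simplex_dual_volume_n_minus_2}),
\[
V_{n-2}(F_{ij}) \;=\; \frac{c_2\,V_2(OP_iP_j)}{V_n(P_t)}\cdot c(t)^{n-2},
\]
so that both factors on the right are $t$-independent, and all codimension-two face volumes of $Q_t$ are constant over the family.

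To verify the Euclidean dihedral angle condition, I would observe that the 2-plane orthogonal to $F_{ij}$ in $\mathbb{R}^{p,n-p}$ is precisely the span of $v_i$ and $v_j$: both of these normals annihilate every direction tangent to $F_i\cap F_j=F_{ij}$. The induced metric on this span is the Gram matrix
\[
\begin{pmatrix} v_i^2 & v_i\cdot v_j \\ v_i\cdot v_j & v_j^2 \end{pmatrix},
\]
whose diagonal entries are positive and whose determinant equals $4\,V_2(OP_iP_j)^2>0$; hence it is positive definite, so the orthogonal plane is Euclidean, as required.

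Non-congruence of $\{Q_t\}$ follows because duality (together with the continuous rescaling $c(t)$) sends distinct congruence classes of $P_t$ to distinct congruence classes of $Q_t$, and $\{P_t\}$ is non-trivial by Theorem~\ref{theorem_simplex_triangle_non_rigid}. The main difficulty I anticipate is bookkeeping rather than algebra: one must check that the signature $(p,n-p)$ does not jump as $t$ varies (which follows from Lemma~\ref{lemma_symmetric_matrix_decomposition} applied in a small neighborhood of the base parameter, where the signs of the eigenvalues are locally constant), and that the combinatorial structure of $Q_t$ is preserved throughout (which is guaranteed by each $v_i^2$ remaining positive, ruling out degenerations where a facet normal becomes lightlike or inward-pointing).
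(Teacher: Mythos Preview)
Your proposal is correct and follows essentially the same route as the paper: obtain the family $\{P_t\}$ from Theorem~\ref{theorem_simplex_triangle_non_rigid}, pass to the dual $Q_t$ with a varying scale $c(t)$, and use (\ref{simplex_dual_volume_n_minus_2}) to force the codimension-two face volumes to be constant. If anything, you spell out more than the paper does---the explicit Gram-matrix verification that the plane $\mathrm{span}(v_i,v_j)$ orthogonal to $F_{ij}$ is Euclidean, the precise choice of $c(t)$, and the signature-stability remark are all left implicit in the paper's two-line argument.
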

\addtocounter{theorem}{-1}
\endgroup

\begin{proof}
By Theorem~\ref{theorem_simplex_triangle_non_rigid},
there exists a continuous family of non-congruent $n$-simplices $P$ 
with the centroid fixed at the origin $O$,
such that all triangles $OP_iP_j$ have fixed areas and are in Euclidean planes.
Let $Q$ be a dual of $P$ for some factor $c$ ($c$ may vary when $P$ varies),
then the dihedral angles of $Q$ are all Euclidean angles.
With the proper scale of $c$ for $Q$, by (\ref{simplex_dual_volume_n_minus_2}),
the $(n-2)$-volumes of all the $(n-2)$-faces of $Q$ can be adjusted to remain constant during the deformation.
This finishes the proof.
\end{proof}

Now with both Theorem~\ref{theorem_flexible_pseudo_Euclidean_n_4}
and \ref{theorem_flexible_pseudo_Euclidean} proved,
for any $n$-simplex $Q$ in $\mathbb{R}^{p,n-p}$ ($n\ge 4$)
with fixed volumes of codimension 2 faces during a deformation,
in the spirit of the bellows conjecture \cite{Sabitov:invariance,ConnellySabitovWalz}, or for other heuristics,
it seems natural to ask the following question.

\begin{question}
\label{question_flexible_simplex_volume}
For $n\ge 4$ in $\mathbb{R}^{p,n-p}$ for some $p$, 
if there exists a continuous family of $n$-simplices $Q$ 
with fixed $(n-2)$-volumes of all the $(n-2)$-faces,
then does the volume of $Q$ also necessarily remain constant?
\end{question}

For $n=4$ in $\mathbb{R}^{3,1}$, we answer this question negatively in
Corollary~\ref{corollary_flexible_volume_non_constant_4}.
For $n\ge 5$, we give a loose discussion on this topic based on the counterexamples
we constructed in Theorem~\ref{theorem_flexible_pseudo_Euclidean}.
By (\ref{simplex_dual_volume_n_minus_2}) and (\ref{simplex_dual_volume_n}),
because both $V_{n-2}(F_{ij})$ and $V_2(OP_iP_j)$ remain constant during the deformation,
we can show that for the volume of $Q$ to remain constant,
\emph{if and only if} both $V_n(P)$ and $c$ also remains constant during the deformation;
and if so, we can set $c=1$ and let $Q$ be the \emph{polar} dual $P^{\ast}$ of $P$.
Regarding the volume of $P$, it is addressed in Remark~\ref{remark_constant_volume_P}
that for the \emph{particular} counterexamples of $P$ we constructed in $\mathbb{R}^{p,n-p}$
in Theorem~\ref{theorem_simplex_triangle_non_rigid} (from Lemma~\ref{lemma_A_plus_B_U0_odd}),
the volume of $P$ \emph{does} remain constant for $n$ even but not for $n$ odd.
Thus, for the particular counterexamples of $Q$ we constructed in $\mathbb{R}^{p,n-p}$
in Theorem~\ref{theorem_flexible_pseudo_Euclidean},
the volume of $Q$ remains constant for $n$ even but not for $n$ odd.
This answers Question~\ref{question_flexible_simplex_volume} negatively if $n$ is odd,
and combining with Corollary~\ref{corollary_flexible_volume_non_constant_4} for $n=4$,
we prove Theorem~\ref{theorem_flexible_volume_non_constant}.

\begingroup
\def\thetheorem{\ref{theorem_flexible_volume_non_constant}}
\begin{theorem}
If $n=4$ or $n\ge 5$ and $n$ is odd, in $\mathbb{R}^{p,n-p}$ for some unspecified $p\ge 2$,
there exists a continuous family of non-congruent $n$-simplices $Q$
with fixed $(n-2)$-volumes of all the $(n-2)$-faces,
but the volume of $Q$ does not remain constant.
\end{theorem}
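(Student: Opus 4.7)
The plan is to split by cases on $n$. For $n=4$, the statement is already proved as Corollary \ref{corollary_flexible_volume_non_constant_4}: the explicit formula $\det(C) = -4a_1^2 b_4^2(a_2a_3\, t + b_2 b_3/t)^2$ from Lemma \ref{lemma_flexible_pseudo_4} is manifestly non-constant in $t$ (except at the isolated value $t^2 = -b_2b_3/(a_2a_3)$), so the 4-volume of the constructed family $Q$ in $\mathbb{R}^{3,1}$ varies with $t$. The only real work is therefore the case $n \ge 5$ odd.

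For odd $n = 2m-1 \ge 5$ (so $m \ge 3$), I would return to the construction underlying Theorem \ref{theorem_flexible_pseudo_Euclidean}, in which $Q$ is obtained as a dual of an $n$-simplex $P$ realized from the Gram matrix $U = tA_{2m} + \tfrac{1}{t}B_{2m}$ of Lemma \ref{lemma_A_plus_B_U0_even}. The key step is to leverage the dual-volume identities \eqref{simplex_dual_volume_n_minus_2} and \eqref{simplex_dual_volume_n}: since Theorem \ref{theorem_simplex_triangle_non_rigid} guarantees $V_2(OP_iP_j)$ is held fixed during the deformation, and $V_{n-2}(F_{ij})$ is likewise held fixed by the appropriate choice of $c$ as a function of $V_n(P)$, one obtains $c^{n-2} = K\, V_n(P)$ for some constant $K$. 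Substituting into \eqref{simplex_dual_volume_n} gives
\[
V_n(Q) \;=\; c_0\, K^{n/(n-2)}\, V_n(P)^{2/(n-2)},
\]
and since $n > 2$ this identity reduces the theorem to showing that $V_n(P)$ is non-constant in $t$.

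The concluding step is to verify the non-constancy of $V_n(P)$. Writing $v_i = \overrightarrow{OP_i}$ and letting $U_0$ denote the upper-left $n\times n$ submatrix of $U$, we have $V_n(P)^2 = (n!)^{-2}|\det(U_0)|$. Because $\rank(A_{2m}) = \rank(B_{2m}) = m$ in the construction, every minor of $A_{2m}$ or $B_{2m}$ of order exceeding $m$ vanishes; expanding $\det(U_0)$ as a sum of products of a $k$-minor of $tA_{2m}$ and its complementary $(n-k)$-minor of $\tfrac{1}{t}B_{2m}$, only $k \in \{m-1, m\}$ contribute, yielding a non-trivial linear combination of $t$ and $1/t$ with no constant term --- exactly the observation for odd $n$ recorded in Remark \ref{remark_constant_determinant}. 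Hence $\det(U_0)$ is non-constant in $t$, so $V_n(P)$ varies, and by the reduction above so does $V_n(Q)$. The main obstacle I anticipate is the bookkeeping subtlety that one must analyze $U_0$ rather than $U$ itself (the latter has $\mathbf{1}$ in its null space, so $\det(U)\equiv 0$ tells nothing); but this is not a genuine difficulty, because $\rank(U_0) = \rank(U) = n$ in this construction, so the same rank-based minor argument applies to $U_0$ verbatim.
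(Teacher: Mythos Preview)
Your proposal is correct and follows essentially the same route as the paper: both split into the $n=4$ case (handled by Corollary~\ref{corollary_flexible_volume_non_constant_4}) and the odd $n\ge5$ case, where the duality identities \eqref{simplex_dual_volume_n_minus_2}--\eqref{simplex_dual_volume_n} reduce constancy of $V_n(Q)$ to constancy of $V_n(P)$, which then fails by the minor computation recorded in Remark~\ref{remark_constant_determinant}. One small correction: your formula $V_n(P)^2 = (n!)^{-2}|\det(U_0)|$ is off by a factor of $(n+1)^2$, since $\det(U_0)$ computes the squared volume of the subsimplex $OP_1\cdots P_n$ rather than of $P$ itself; the centroid relation $V_n(P)=(n+1)\,V_n(OP_1\cdots P_n)$ supplies the missing factor, and this constant is of course harmless for the non-constancy argument.
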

\addtocounter{theorem}{-1}
\endgroup

While Question~\ref{question_flexible_simplex_volume} is still open if $n$ is even,
our construction provides some positive evidence for it.





\bibliographystyle{abbrv}  
\bibliography{codimension2_arxiv}   

%
%

\end{document}